\title{Ordinal Sums of Numbers}
\author{Alexander Clow\\
\small Department of Mathematics\\[-0.8ex]
\small Simon Fraser University\\[-0.8ex] 
\small Burnaby, British Columbia, Canada\\
\small\tt alexander\_clow@sfu.ca\\
\and
Neil Anderson McKay\\
\small Department of Mathematics and Statistics\\[-0.8ex]
\small University of New Brunswick\\[-0.8ex]
\small Saint John, New Brunswick, Canada\\
\small\tt neil.mckay@unb.ca\\}
\newcommand{\interval}[2]{\ensuremath{\operatorname{B}\left(#1,#2\right)}}
\newcommand{\os}{\mathbin{:}}
\newtheorem{theorem}{Theorem}
\newtheorem{corollary}[theorem]{Corollary}
\newtheorem{lemma}[theorem]{Lemma}
\newtheorem{proposition}[theorem]{Proposition}
\theoremstyle{definition}
\theoremstyle{remark}
\newcommand{\game}[2]{\{ #1 | #2 \}}
\newcommand{\TT}{\textsc{Teetering Towers}}
\newcommand{\Hackenbush}{\textsc{Hackenbush}}
\begin{document}

\maketitle
\begin{abstract}
In this paper we consider ordinal sums of combinatorial games where each summand is a number, not necessarily in canonical form. In doing so we give formulas for the value of an ordinal sum of numbers where the literal form of the base has certain properties. These formulas include a closed form of the value of any ordinal sum of numbers where the base is in canonical form. Our work employs a recent result of Clow which gives a criteria for an ordinal sum $G\os K = H\os K$ when $G$ and $H$ do not have the same literal form, as well as expanding this theory with the introduction of new notation, a novel ruleset \TT{}, and a novel construction of the canonical forms of numbers in \TT{}. In doing so, we resolve the problem of determining the value of an ordinal sum of numbers in all but a few cases appearing in Conway's \emph{On Numbers and Games}; thus generalizing a number of existing results and techniques including Berlekamp's sign rule, van Roode’s signed binary number method, and recent work by Carvalho, Huggan, Nowakowski, and Pereira dos Santos. We conclude with a list of open problems related to our results.
\end{abstract}

\section{Introduction}

A \emph{combinatorial game} $G$ is a two-player game of no chance and perfect information. The players of a combinatorial game are refereed to as Left  (given female pronouns) and Right (given male pronouns). A game $G$ is often written $G \cong \game{L(G)}{R(G)}$. Here $\cong$ denotes that two games are identical, $L(G)$ is the set of games (options) that Left can move to if she moves first, and $R(G)$ is the set of games that Right can move to if he moves first. We do not insist on it being either player's turn apriori; this allows a range of algebraic structures to emerge in our analysis of games.

Though combinatorial games in general allow for infinite sequences of moves or returning to a previous position, the games we consider in this paper have neither of these properties; that is play will complete after a finite number of turns regardless of the decisions made by either player. Games with this property are called short or finite. In particular we consider Normal Play games; games were a player loses when they are unable to make a move on their turn. 

In this paper we consider two binary operations on games. The more significant of these structures being the abelian group (under normal play) $\mathbb{G}=(\mathcal{G}/=,+)$  where $\mathcal{G} = \{\text{all combinatorial games}\}$, $+$ is the disjoint sum of two games defined by
\[
G+H \cong \game{G+L(H),L(G)+H}{R+R(G),R(H)+G}
\]
where  $G\cong \game{L(G)}{R(G)}$ and $H\cong \game{L(H)}{R(H)}$ and addition of a single game to a set of games is preformed pointwise in the expected manner. Inverses are given by $-G \cong \game{-R(G)}{-L(G)}$ (roles of Left and Right are switched) and $G \leq H$ if Left wins moving second in $G-H$. Thus, $\{\mathcal{G}\}/\!=$ is the set of all games considered modulo equality under this partial order. The class a game belongs to in $\{\mathcal{G}\}/\!=$ is called its value and has a unique simplest representative called the canonical form. For more on canonical forms see \cite{siegel2013combinatorial}.

The group $\mathbb{G}$ is both natural to consider and significant for the following reasons. The binary operation $+$ can be intuitively thought of as placing two game board between each player and allowing players to move on exactly one board per turn. This situation arises naturally in a variety of games where after some sequences of moves the game decomposes into several smaller games, where moving in one subgame does not effect the others. In such situations the winner of the sum is determined by the values of a the summands. Because of this value is the primary invariant analysed when studying combinatorial games.

This paper primarily considers another significant binary operation on games, the ordinal sum. Given games $G\cong \game{L(G)}{R(G)}$ and $H\cong \game{L(H)}{R(H)}$ the ordinal sum of $G$ and $H$, denoted $G\os H$ is defined by,
\[
G\os H \cong \game{L(G),G\os L(H)}{R(G),G\os R(H)}.
\]
Intuitively, one their turn either player can move in $G$ or in $H$, but should they move in $G$, then neither player can move in $H$ for the remainder of play. Ordinal sums are an ideal model for situations where one or both players have the opportunity to delay making critical moves but in such a way that once a critical move is made the ability to delay is no longer available. As the ordinal sum is nonabelian we call the left summand the base and the right summand the subordinate. This is done largely to prevent confusion with the names of each player. 

A key tool when analysing ordinal sums is the Colon Principle which states that if $G\leq H$, then $K\os G \leq K\os H$ for all $K$. Of course this implies that if $G=H$, then $K\os G = K\os H$. Unfortunately the claim, if $G=H$, then $G \os K = H\os K$ is false. Thus, some knowledge of the literal form and not just the value of the base is required. This is particularly significant because in many contexts it is sufficient to consider games in canonical form. The failure of this claim is a major roadblock to studying ordinal sums. As a result ordinal sums have proved very challenging to study in generality. The exceptions to this rule being impartial games, where there is an exact method to calculate the value of an ordinal sum of two impartial games given in \cite{carvalho2018ordinal}, and for numbers where some partial results are known.

To see an example of an ordinal sum appearing in an actual game we introduce the well-known ruleset \textsc{Blue-Red Hackenbush}. \textsc{Blue-Red Hackenbush} is played on a rooted graph $G = (V,E)$ with a $2$-coloured (blue and red) edge set. On her turn Left can choose a blue edge $e$ and delete from the graph. Additionally, every edge this might disconnect from the root vertex is also deleted. Similarly, on his turn Right can delete red edges. See \cref{Fig: Hackenbush}.

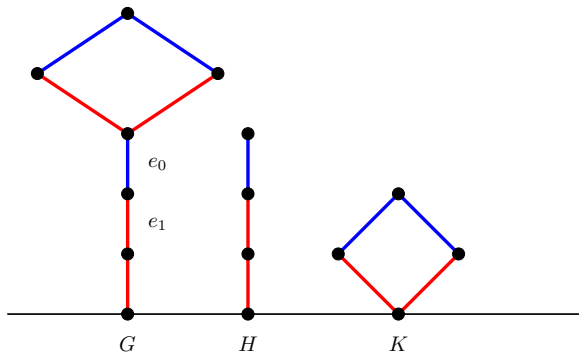
\begin{figure}[h!]
\centering 
\scalebox{0.8}{ 
    \begin{tikzpicture}

 \draw[thick,black] (8.5,0) -- (18,0);

 	\node[fill=none] at (10.5,-0.5) (nodes) {$G$};
	\draw[ultra thick,red](10.5,0)--(10.5,1);
	\draw[ultra thick,red](10.5,1)--(10.5,2);
 	  \node[fill=none] at (11,1.5) (nodes) {$e_1$};
	\draw[ultra thick,blue](10.5,2)--(10.5,3);
 	  \node[fill=none] at (11,2.5) (nodes) {$e_0$};
	\draw[ultra thick,red](10.5,3)--(9,4);
	\draw[ultra thick,blue](9,4)--(10.5,5);
	\draw[ultra thick,blue](10.5,5)--(12,4);
	\draw[ultra thick,red](12,4)--(10.5,3);
 
	\filldraw[black] (10.5,0) circle [radius=1mm];
   	\filldraw[black] (10.5,1) circle [radius=1mm];
   	\filldraw[black] (10.5,2) circle [radius=1mm];
   	\filldraw[black] (10.5,3) circle [radius=1mm];
   	\filldraw[black] (10.5,5) circle [radius=1mm];
   	\filldraw[black] (9,4) circle [radius=1mm];
   	\filldraw[black] (12,4) circle [radius=1mm];

    \node[fill=none] at (12.5,-0.5) (nodes) {$H$};
	\draw[ultra thick,red](12.5,0)--(12.5,1);
	\draw[ultra thick,red](12.5,1)--(12.5,2);
	\draw[ultra thick,blue](12.5,2)--(12.5,3);

    \filldraw[black] (12.5,0) circle [radius=1mm];
   	\filldraw[black] (12.5,1) circle [radius=1mm];
   	\filldraw[black] (12.5,2) circle [radius=1mm];
   	\filldraw[black] (12.5,3) circle [radius=1mm];

    \node[fill=none] at (15,-0.5) (nodes) {$K$};
	\draw[ultra thick,red](15,0)--(14,1);
	\draw[ultra thick,blue](14,1)--(15,2);
	\draw[ultra thick,blue](15,2)--(16,1);
	\draw[ultra thick,red](16,1)--(15,0);

    \filldraw[black] (15,0) circle [radius=1mm];
   	\filldraw[black] (15,2) circle [radius=1mm];
   	\filldraw[black] (14,1) circle [radius=1mm];
   	\filldraw[black] (16,1) circle [radius=1mm]; 
    \end{tikzpicture}
  }
    \caption{Games of \textsc{Blue-Red Hackenbush}.}
    \label{Fig: Hackenbush}
\end{figure}

Notice that in \cref{Fig: Hackenbush} once $e_0$ is removed by Left or $e_1$ is removed by Right every edge above $e_0$ is also deleted. Thus, $G \cong H \os K$. \textsc{Blue-Red Hackenbush} is a classic example of a special class of games called numbers. Numbers are games of the form $G = \game{L(G)}{R(G)}$ such that for all $G^L \in L(G)$ and $G^R \in R(G)$, $G^L < G^R$. Numbers are significant as they form one of the most well-behaved classes of games. One of the main reasons for this is hinted at by their name, as the the values of the games we call numbers are exactly the surreal numbers defined by Conway \cite{conway2000numbers}. In particular, numbers with a finite birthday are exactly the subgroup of the surreals which is isomorphic to the dyadic rationals, denoted $\mathbb{D}= \{\frac{a}{2^p}: a,p \in \mathbb{Z}\}$. In this way we label the values of short numbers by their corresponding element in $\mathbb{D}$. For example $\game{}{} = 0$, $\game{0}{} = 1$, and $\game{0}{1} = \frac{1}{2}$. 

We think about constructing all values of short numbers as short games in the following way. First, for $x\geq 0$ if $x$ is an integer, then $x+1 = \game{x}{}$. Second, if $x$ is not an integer, then $x = \frac{a}{2^p} = \game{\frac{a}{2^p} - \frac{1}{2^p}}{\frac{a}{2^p} + \frac{1}{2^p}}$ \cite{siegel2013combinatorial}. Supposing $a$ is odd, $\frac{a}{2^p} - \frac{1}{2^p}$ and $\frac{a}{2^p} + \frac{1}{2^p}$ can both be expressed with a denominator $2^q$ where $q<p$. Thus, we can construct new numbers from those we have already generated in a similar manner to how reals are constructed using Dedekind cuts. The difference here is that rather than beginning with rationals to form reals, we take a list of numbers we have generated, then generate the next number by deciding where the new number fits in the total ordering (i.e which of the existing numbers it is bigger than and which it is smaller than). To construct negative numbers we negate positive numbers. For more on short numbers see Section~2.2. For a broader study of numbers see \cite{conway1996surreal, conway2000numbers, ehrlich2001number, lurie1998effective, siegel2013combinatorial}.

Importantly for our purposes in \cite{conway2000numbers} Conway constructs the canonical forms of surreals as sign sequences, which correspond to stalks (paths where the rooted vertex is a leaf) in \textsc{Blue-Red Hackenbush}. As we have already seen these are themselves repeated ordinal sums of integers in canonical form. For a detailed description of this see \cite{conway1996surreal}. Determining the values of such positions is well-studied. In particular we highlight Berlekamp's Sign Rule and van Roode’s signed binary number method which are both described in \cite{albert2019lessons} (see pages 134 and 135). Unfortunately neither of these methods lead to closed expressions. Of particular interest to our work is the van Roode’s signed binary number method as much of our work in Section~4 can be looked at as a generalization of this method to position where the base of the sum is not necessarily in canonical form. Ironically, through this generalization we arrive at a closed form for the case where the base of an ordinal sum of numbers is in canonical form (which is exactly what van Roode’s method describes). Along with these more classical methods, the problem of determining the value of an ordinal sum of numbers was recently considered by Carvalho et al. \cite{carvalho2022ordinal} who give a generalization of van Roode’s method for some ordinal sums where the base in an integer in which only one player has an option.

As a means to examine more complicated ordinal sums of numbers we introduce a novel ruleset which we call \TT{}. We introduce a notation for repeated ordinal sums. Let  $ \bigodot_{i = 1}^{n-} G_i = G_1 \os G_2 \os \cdots \os G_{n}$ for any collection of games $G_1,\ldots, G_{n}$. Using this notation we say a \emph{tower} $T$ is a game of the of the form $T =  \bigodot_{i = 1}^{n} (b_i + r_i)$ where for all $i$, $b_i$ is a non-negative integer in canonical form and $r_i$ is a non-positive integer in canonical form. Each $b_i+r_i$ is called a story of $T$. A position in \TT{} is a game of the form $ \sum_{j=1}^m \bigodot_{i=1}^n (b_{j,i}+r_{j,i})$ where $\sum$ is used in the standard way where $+$ is the disjunctive sum. For an example of a game of \TT{} see \cref{SumOfTowers}.

\begin{figure}[ht]
 \centering
\scalebox{0.5}{ 
	    \begin{tikzpicture}
     
\draw[thick,black] (0,0) -- (4.25,0);
 
	\fill[blue!70] (0,0.25) rectangle (2,1.25);
	
	\fill[red!70] (2.25,0.25) rectangle (4.25,1.25);
	\fill[red!70] (2.25,1.5) rectangle (4.25,2.5);
	
	\fill[black] (1,1.4) rectangle (1.2,2.5);
	 
\draw[thick,black] (0,2.65) -- (4.25,2.75);

	\fill[blue!70] (0,3) rectangle (2,4);
	\fill[blue!70] (0,4.25) rectangle (2,5.25);
 
	\fill[red!70] (2.25,3) rectangle (4.25,4);
	\fill[red!70] (2.25,4.25) rectangle (4.25,5.25);
	
\draw[thick,black] (6,0) -- (10.25,0);
 
	\fill[blue!70] (6,0.25) rectangle (8,1.25);
	\fill[blue!70] (6,1.5) rectangle (8,2.5);
	
	\fill[red!70] (8.25,0.25) rectangle (10.25,1.25);
	\fill[red!70] (8.25,1.5) rectangle (10.25,2.5);
	 
\draw[thick,black] (6,2.75) -- (10.25,2.75);

	\fill[blue!70] (6,3) rectangle (8,4);
	\fill[blue!70] (6,4.25) rectangle (8,5.25);
	\fill[blue!70] (6,5.5) rectangle (8,6.5);
 
	\fill[red!70] (8.25,3) rectangle (10.25,4);
	\fill[red!70] (8.25,4.25) rectangle (10.25,5.25);
	
	\fill[black] (9.25,5.45) rectangle (9.45,6.5);
	
\draw[thick,black] (6,6.75) -- (10.25,6.65);

	\fill[red!70] (8.25,7) rectangle (10.25,8);
	
\draw[thick,black] (12,0) -- (16.25,0);

	\fill[blue!70] (12,0.25) rectangle (14,1.25);
	\fill[blue!70] (12,1.5) rectangle (14,2.5);
	\fill[blue!70] (12,2.75) rectangle (14,3.75);
	\fill[blue!70] (12,4) rectangle (14,5);
	\fill[blue!70] (12,5.25) rectangle (14,6.25);
	
	\fill[red!70] (14.25,0.25) rectangle (16.25,1.25);
	\fill[red!70] (14.25,1.5) rectangle (16.25,2.5);
	\fill[red!70] (14.25,2.75) rectangle (16.25,3.75);	 
    \end{tikzpicture}
  }
\caption{A game of \textsc{Teetering Towers}. On her turn Left can choose a blue brick and remove it from the tower. Similarly, on his turn Right removes red bricks. When a brick is removed every story above the one from which the brick was removed from is also removed. \label{SumOfTowers}}
\end{figure}
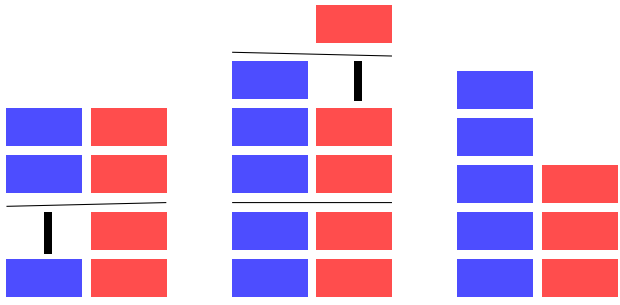

Notice that any position in \TT{} where each story is monochromatic (i.e $b_i = 0$ or $r_i = 0$) is exactly a \textsc{Blue-Red Hackenbush} stalk. For example, see \cref{HackenbushTowers}. Hence, forms (and thus also values) from \Hackenbush\ strings arise in \TT{}. There are forms in \TT{} that do not occur in \Hackenbush; we show this explicitly in  \cref{sec:towers}. This relationship is deliberate as it allows for games (numbers) to be constructed which are far from being in canonical form, while also allowing for well understood positions to appear. Given known criteria, such as those that appear in \cite{carvalho2022note}, it is easy to determine that every position in \TT{} is a number.

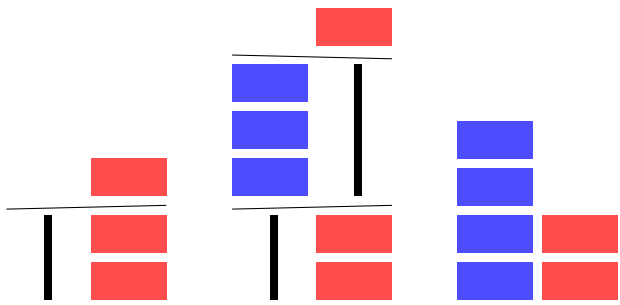
\begin{figure}[!h]
 \centering
\scalebox{0.5}{ 
	    \begin{tikzpicture}
     
\draw[thick,black] (0,0) -- (4.25,0);
 
	\fill[red!70] (2.25,0.25) rectangle (4.25,1.25);
	\fill[red!70] (2.25,1.5) rectangle (4.25,2.5);
	
	\fill[black] (1,0.25) rectangle (1.2,2.5);
	 
\draw[thick,black] (0,2.65) -- (4.25,2.75);

	\fill[red!70] (2.25,3) rectangle (4.25,4);
	
\draw[thick,black] (6,0) -- (10.25,0);
 
	\fill[black] (7,0.25) rectangle (7.2,2.5);
	
	\fill[red!70] (8.25,0.25) rectangle (10.25,1.25);
	\fill[red!70] (8.25,1.5) rectangle (10.25,2.5);
	 
\draw[thick,black] (6,2.65) -- (10.25,2.75);

	\fill[blue!70] (6,3) rectangle (8,4);
	\fill[blue!70] (6,4.25) rectangle (8,5.25);
	\fill[blue!70] (6,5.5) rectangle (8,6.5);
 
	\fill[black] (9.25,3) rectangle (9.45,6.5);
	
\draw[thick,black] (6,6.75) -- (10.25,6.65);

	\fill[red!70] (8.25,7) rectangle (10.25,8);
	
\draw[thick,black] (12,0) -- (16.25,0);

	\fill[blue!70] (12,0.25) rectangle (14,1.25);
	\fill[blue!70] (12,1.5) rectangle (14,2.5);
	\fill[blue!70] (12,2.75) rectangle (14,3.75);
	\fill[blue!70] (12,4) rectangle (14,5);
	
	\fill[red!70] (14.25,0.25) rectangle (16.25,1.25);
	\fill[red!70] (14.25,1.5) rectangle (16.25,2.5);
    \end{tikzpicture}
  }
\caption{A game of \textsc{Teetering Towers} that is equivalent to a \Hackenbush{} 
position.\label{HackenbushTowers}}
\end{figure}

The paper is structured as follows. In Section~2 we establish ideas and notation that are key to the rest of the paper. In Section~2 we cover equivalence modulo domination as introduced in \cite{Clow2022} and its implications for ordinal sums, some important background on numbers, and novel notation which is convenient when considering numbers modulo domination. In Section~3 we demonstrate how the contents of Section~2 can be applied to the ruleset \TT{}. In doing so we construct a tower equivalent modulo domination to the canonical form of any number $x$ as an ordinal sum, but unlike in Conway's \textsc{Hackenbush} stalk/sign sequence model \cite{conway1996surreal, conway2000numbers}, each of our summands has the same sign as $x$ or is equal to $0$ (see \cref{LeaningTower}). Finally, in Section~4 we focus on determining the value of an ordinal sum of numbers for various literal forms as the base of the sum. The primary results of which is the quite general \cref{GameOsInteger}, which gives a powerful formula for the ordinal sum of a number (base) and an integer (subordinate), as well as \cref{BalancedOSNumber} which is a closed-form expression for the value of an ordinal sum of a balanced number  with radius $\frac{-1}{2^p}$ (base) and any number as the subordinate. Using \cref{BalancedOSNumber} we give a formula for the ordinal sum of any numbers where the base is in canonical form, see Table~3.

\section{Preliminaries}

\subsection{Equivalence Modulo Domination}
The main tool that enables our analysis of ordinal sums is equivalence modulo domination.  We say $G$ and $H$ are \emph{equivalent modulo domination} and we write $G \triangleq H$, if for all options of $G +(-H)$ by the first player there is a winning response by the second player in the other summand. Although quite natural to the knowledge of the authors, the concept of equivalence modulo domination first appears in the recent thesis of Clow \cite{Clow2022}. 

By definition, $G \triangleq H$ implies $G=H$ as $G \triangleq H$ implies for every move by the first player there is a winning response by the second player in $G-H$. Also by definition, for every incentive in $G$ for the first player, there is an incentive that is equally appealing for the second player in $-H$ (and visa versa). We use $\triangleq$ as the notion for equivalence modulo domination due to this second fact, as incentives are often denoted by $\Delta$ in the literature \cite{siegel2013combinatorial}. 

Notice that $G = H$ does not necessarily imply that $G\triangleq H$. For example, in the difference  $\game{-1}{1} - 0 = 0$ the only winning response for the second player is in the same summand as the first player. Thus, $\game{-1}{1} = 0$ but $\game{-1}{1} \not\triangleq 0$. Hence, equivalence modulo domination is a refinement of equality. This is natural given two games are equal if and only if they have the same canonical form, where canonical forms are achieved by removing dominated and reversible options. Meanwhile, equivalence modulo domination can be thought of as removing dominated options but not reversible options. For a formal proof of this see Lemma~\ref{TeqImpDomRed}.

\begin{lemma}\label{TeqImpDomRed}
If $H$ is derived from $G$ by removing a dominated option, then $G \triangleq H$.
\end{lemma}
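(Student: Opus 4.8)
The plan is to unfold the definition of $\triangleq$ directly: I must show that in $G + (-H)$, whichever player moves first, the opponent has a winning reply in the summand that was \emph{not} just played. I would first cut down the casework by symmetry. Removing a dominated option splits into two cases, a deleted Left option and a deleted Right option; since negation interchanges the roles of Left and Right and preserves $\triangleq$ (because $G \triangleq H$ holds precisely when the difference $G + (-H)$ has the stated second-player property, and that property is invariant under negating the whole difference), it suffices to treat one. So I would assume $H$ is obtained from $G \cong \game{G^{L_1}, G^{L_2}, \dots}{R(G)}$ by deleting a Left option $G^{L_1}$ dominated by $G^{L_2}$, i.e.\ $G^{L_1} \le G^{L_2}$; thus $L(H) = L(G) \setminus \{G^{L_1}\}$ and $R(H) = R(G)$, while $-H \cong \game{-R(G)}{-L(H)}$.

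The backbone of the argument is a mirroring (Tweedledum--Tweedledee) strategy. Because $G$ and $H$ agree on every option except the single deleted Left option, almost every first move can be answered by copying it in the other component, reaching a position of the form $X + (-X) = 0$, which is a second-player win. Concretely, a first move $G \to G^L$ with $G^L \neq G^{L_1}$ is answered by $-H \to -G^L$; a first move $G \to G^R$ is answered by $-H \to -G^R$; and any first move inside $-H$ is answered by the matching move in $G$. In each case I would check that the reply indeed lies in the other summand and lands on $0$.

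The one move with no exact copy --- and the step I expect to carry all the content --- is Left's move $G \to G^{L_1}$, since $G^{L_1}$ is no longer an option of $H$. Here I would have Right answer in $-H$ by moving to $-G^{L_2}$, reaching $G^{L_1} + (-G^{L_2}) = G^{L_1} - G^{L_2}$. Domination gives $G^{L_1} \le G^{L_2}$, hence $G^{L_1} - G^{L_2} \le 0$, which under normal play is exactly a loss for the player now to move (Left), i.e.\ a win for Right, the responder. The delicate points to confirm are that this reply genuinely lies in the other summand, that $G^{L_2}$ survives as an option of $H$ (it does, since only $G^{L_1}$ was deleted), and the negation-invariance of $\triangleq$ invoked in the reduction. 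Having matched every first move with a winning reply in the opposite summand, I would conclude $G \triangleq H$.
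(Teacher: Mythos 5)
Your proposal is correct and follows essentially the same argument as the paper: mirror every move that still has a counterpart in the other summand, and answer a move to the deleted dominated option $G^{L_1}$ by moving to the dominating option $G^{L_2}$ in the other summand, winning because $G^{L_1} - G^{L_2} \le 0$ with the opponent to move. The paper's proof is just a terser version of this (it omits the negation-symmetry reduction and the explicit case checks), so your write-up is a faithful, more detailed rendering of the same idea.
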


\begin{proof}
Let $H$ be derived from $G$ by removing a dominated option, say $G'$. Consider $G - H$. If the first 
player moves in $G$ to $G'$, the opponent responds in $H$ to the option that dominated $G'$. 
Otherwise, the second player can always respond in the other summand to the same option.
\end{proof}

\begin{theorem}[Clow \cite{Clow2022}]\label{TeqBaseImpOSTeq}
If $G \triangleq H$, then for all games $K$, $G \os K \triangleq H \os K$ and $K \os G \triangleq K \os H$.
\end{theorem}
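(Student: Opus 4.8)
The plan is to verify the definition of $\triangleq$ directly in each difference game, handling the subordinate-varying statement $K\os G\triangleq K\os H$ and the base-varying statement $G\os K\triangleq H\os K$ separately, since they behave very differently. I will use two preliminary facts. First, negation distributes over ordinal sums, $-(X\os Y)\cong(-X)\os(-Y)$, by an immediate induction comparing option sets; this lets me write the two differences as $(K\os G)+(-K)\os(-H)$ and $(G\os K)+(-H)\os(-K)$ and read off their options. Second, the Colon Principle in its monotone form $X\le Y\Rightarrow K\os X\le K\os Y$. I will also unpack the hypothesis: $G\triangleq H$ says each first-player option of $G+(-H)$ has a reply in the opposite summand after which the player to move loses; concretely a Left move $G\to G^L$ is answered by a Right move $-H\to -H^L$ with $G^L\le H^L$, and symmetrically for the other three kinds of first move.

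For $K\os G\triangleq K\os H$ I give the second player's reply to each first move of $(K\os G)+(-K)\os(-H)$. A move in a copy of the base, say $K\os G\to K^L$, deletes the subordinate, and I answer with the negated identical base move $(-K)\os(-H)\to -K^L$, reaching $K^L+(-K^L)=0$. A move in a subordinate, say $K\os G\to K\os G^L$, I answer using the hypothesis: choose $H^L$ with $G^L\le H^L$ and play $(-K)\os(-H)\to(-K)\os(-H^L)$; the resulting position is $(K\os G^L)-(K\os H^L)$, and since $G^L\le H^L$ the Colon Principle gives $K\os G^L\le K\os H^L$, so the player now to move loses. The remaining first moves are symmetric. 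This case needs no induction, because the subordinate is exactly where the Colon Principle bites.

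The base-varying statement $G\os K\triangleq H\os K$ is the essential one, and I will induct on $K$. The base case $K=0$ is immediate since $G\os 0\cong G$ and $H\os 0\cong H$, so the statement reduces to the hypothesis. For the step I reply to each first move of $(G\os K)+(-H)\os(-K)$. A move in a subordinate copy of $K$, say $G\os K\to G\os K^L$, I mirror with the same option on the other side, $(-H)\os(-K)\to(-H)\os(-K^L)$, landing in $(G\os K^L)-(H\os K^L)$; the inductive hypothesis for the simpler subordinate $K^L$ gives $G\os K^L\triangleq H\os K^L$, hence this difference is $0$. A move in the base, say $G\os K\to G^L$, deletes the subordinate $K$ entirely; here I invoke $G\triangleq H$ to get the base reply $(-H)\os(-K)\to -H^L$ (which also deletes $-K$), reaching exactly $G^L-H^L$, a position the hypothesis certifies as a winning reply. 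Symmetric first moves are handled identically.

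The crux is this base-varying case, which cannot lean on the Colon Principle, precisely because $G=H$ does not force $G\os K=H\os K$ — the failure that makes ordinal sums subtle in the first place. Two observations carry the induction: a base move collapses the subordinate on both sides simultaneously, reducing the position to a difference $G^L-H^L$ of bases to which $G\triangleq H$ applies verbatim; and a subordinate move leaves the ordinal-sum shape intact but strictly simplifies $K$, so the inductive hypothesis applies. The bookkeeping to watch is matching each first move to the correct option of $-(H\os K)\cong(-H)\os(-K)$ and keeping the inequalities oriented correctly between Left and Right options, so that every exhibited reply lies in the other summand and is genuinely winning.
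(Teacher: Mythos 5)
Your proof is correct, and the move-pairing at its heart --- mirror moves made in the subordinate, answer moves made in the base as if playing $G-H$ using the hypothesis $G \triangleq H$ --- is exactly the pairing used in the paper's proof. Where you genuinely differ is in how the replies are certified as winning. The paper handles both claims with a single global strategy argument: the second player keeps mirroring subordinate moves and switches to the $G-H$ strategy the moment a base is entered, with $K \os G \triangleq K \os H$ dismissed as ``a similar mirroring strategy''; that the combined strategy wins is asserted rather than verified. You instead verify each reply locally. For $G \os K \triangleq H \os K$ you induct on $K$, so that after a mirrored subordinate move the resulting position $G\os K^L - H\os K^L$ is literally $0$ by the inductive hypothesis, and after a base exchange the position $G^L - H^L \le 0$ is certified directly by $G \triangleq H$. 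For $K \os G \triangleq K \os H$ you avoid induction altogether by invoking the Colon Principle: from the hypothesis-supplied reply with $G^L \le H^L$ you get $K \os G^L \le K \os H^L$, so the mover loses. This buys rigor and modularity --- your argument makes precise the step the paper leaves implicit, and the Colon-Principle route for the subordinate-varying half is cleaner than an unelaborated mirroring claim --- at the cost only of length and of the auxiliary fact $-(X \os Y) \cong (-X)\os(-Y)$, which the paper also uses tacitly when reading off options of the difference.
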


\begin{proof}
Let $G \triangleq H$. Consider $G \os K - H \os K$. Suppose without loss of generality Left moves 
first in $G \os K$. If her move was in the subordinate Right mirrors in $-H \os K$, whereas if her 
move was in the base, then Right responds in $-H$ as if playing the sum $G-H$. As $G \triangleq H$, 
there exists such moves that are winning. Thus, playing $G \os K - H \os K$ if Left (Right) moves 
first on either summand, then Right (Left) has a winning response of the other. Hence, $G \os K 
\triangleq H \os K$ as required. Observe that $K \os G \triangleq K \os H$ follows by a similar mirroring strategy. 
\end{proof}

\begin{corollary}\label{TeqBaseImpOSeq}
    If $G \triangleq H$, then for all games $K$, $G \os K = H \os K$.
\end{corollary}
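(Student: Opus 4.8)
The plan is to derive this purely from \cref{TeqBaseImpOSTeq} rather than re-running any strategy argument. By hypothesis $G \triangleq H$, so the first assertion of \cref{TeqBaseImpOSTeq} applies verbatim and gives $G \os K \triangleq H \os K$ for every game $K$. The only remaining ingredient is the observation recorded immediately after the definition of equivalence modulo domination, namely that $G \triangleq H$ implies $G = H$ (the winning second-player responses witnessing $\triangleq$ are in particular winning responses in the difference $G - H$). Applying that implication to the pair $G \os K$ and $H \os K$ collapses $\triangleq$ to $=$ and yields $G \os K = H \os K$, which is the claim.

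I expect no genuine obstacle here, since the corollary is a one-step consequence of the preceding theorem together with the fact that $\triangleq$ refines $=$. All of the real content — that the mirroring strategy pushes equivalence modulo domination through an ordinal sum, with $G$ and $H$ appearing as the base — is already packaged inside \cref{TeqBaseImpOSTeq}, which I am assuming. The point worth flagging is why the corollary is stated at all: as noted in the introduction, the weaker hypothesis $G = H$ does \emph{not} imply $G \os K = H \os K$, so it is precisely the extra strength of $\triangleq$ over $=$ that makes the conclusion available. Thus the proof amounts to observing that $\triangleq$ is strong enough to be preserved by ordinal sum in the base position (via the theorem) yet still implies ordinary equality (by definition), and chaining these two facts.
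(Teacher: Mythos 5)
Your proof is correct and matches the paper's intended argument exactly: the paper states this corollary without proof precisely because it follows immediately from \cref{TeqBaseImpOSTeq} combined with the definitional fact that $\triangleq$ implies $=$, which is the two-step chain you give. Nothing to add.
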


The converses of \cref{TeqBaseImpOSTeq,TeqBaseImpOSeq} are false, as we see in the following 
example; $\displaystyle \game{0}{3} = 1 = \game{\tfrac{1}{2}}{3}$ and $\game{0}{3} 
\not\triangleq \game{\frac{1}{2}}{3}$, yet 
$\game{0}{3} \os 1 \triangleq \game{1}{3} \triangleq \game{\tfrac{1}{2}}{3} \os 1$. Also of note is that \cref{TeqBaseImpOSTeq} implies \cref{McKay's Theorem}, a known result which deals with canonical forms in the base of an ordinal sum.

\begin{corollary}[\cite{conway2000numbers, mckay2016forms}]\label{McKay's Theorem}
    If $G$ has no reversible options and $K$ is the canonical form of $G$, then for all $H$, $G \os H = K \os H$.
\end{corollary}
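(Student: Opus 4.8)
The plan is to push everything through equivalence modulo domination: if I can show $G \triangleq K$, then \cref{TeqBaseImpOSeq} immediately yields $G \os H = K \os H$ for every $H$. So the entire proof reduces to comparing a reversible-free game with its canonical form through the lens of $\triangleq$ rather than mere equality, and the two earlier results do all of the ordinal-sum bookkeeping for me.

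To prove $G \triangleq K$ I would argue directly in the difference $G + (-K)$, generalizing the response strategy already used in \cref{TeqImpDomRed}. Since $G$ has no reversible options, passing to the canonical form only deletes dominated options and canonicalizes the survivors; consequently each Left (resp. Right) option of $K$ has the same value as some non-dominated Left (resp. Right) option of $G$, while every option of $G$ is dominated in value by one that survives into $K$. Using this correspondence, the second player answers any first-player move into a base option by the matching option in the opposite summand: a Left move to $G^L$ is met by the image in $-K$ of the Left option of $K$ that dominates $G^L$, and a Left move inside $-K$ is met by the Right option of $G$ realizing the chosen Right option of $K$; Right's moves are handled symmetrically. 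In each case the resulting position has value $\le 0$ when Left moved first and $\ge 0$ when Right moved first, with the original mover again to play, so the first player loses. This is precisely the assertion that for every first-player option of $G + (-K)$ the second player has a winning reply in the other summand, i.e.\ $G \triangleq K$.

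The delicate step — the one I expect to be the real obstacle — is justifying the option correspondence invoked above, and it breaks into two ingredients. The first is that $\triangleq$ is a congruence under value-preserving replacement of options: if $G'$ is obtained from $G$ by replacing each option with a game of equal value, then $G \triangleq G'$. This lets me canonicalize the options of $G$ (each of which is reached in the difference, and in the ordinal sum, as a plain game value), and it is proved by the same mirror-response strategy — in $G + (-G')$ the summands carry option-by-option equal values, so the second player answers each first-player move with the twin option in the other summand, reaching value $0$ with the mover to play. The second, and genuinely subtle, ingredient is that under the hypothesis ``$G$ has no reversible options'' the canonical form $K$ really is reached from $G$ by deleting dominated options and canonicalizing the survivors, with no reversal ever intervening. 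The difficulty is that canonicalizing an option replaces its set of followers, while a Left option is reversible precisely when one of its Right followers has value $\le G$; so a priori canonicalizing could create a reversible option where none existed. I would resolve this by reading the hypothesis recursively — no follower of $G$ has a reversible option — whence $K$ is literally $G$ with every dominated option deleted at every level: deleting a dominated option leaves the value of its host and the followers of the remaining options unchanged, hence creates no reversibility, and each such deletion preserves $\triangleq$ by \cref{TeqImpDomRed} together with the congruence above. Establishing this reversibility-free propagation is where the real work lies; once it is in place, the difference-game strategy of the previous paragraph gives $G \triangleq K$, and \cref{TeqBaseImpOSeq} delivers $G \os H = K \os H$.
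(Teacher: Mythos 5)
You follow the paper's route---prove $G \triangleq K$ and then quote \cref{TeqBaseImpOSeq}---and both your mirroring strategy in the difference $G + (-K)$ and your replacement-congruence lemma (replacing options by equal-valued games preserves $\triangleq$) are sound. The genuine gap is in your final step, where you justify the option correspondence by ``reading the hypothesis recursively---no follower of $G$ has a reversible option.'' That is not a reading of the hypothesis but a strict strengthening of it, so your argument proves a weaker statement than the corollary. Concretely, let $G \cong \game{\game{-2}{2}}{}$. Here $\game{-2}{2} = 0$ and $G = 1$; the only option of $G$ is not reversible (its unique Right option $2$ is not $\le G = 1$), so $G$ satisfies the corollary's hypothesis, yet both options of the follower $\game{-2}{2}$ are reversible in it ($-2$ reverses out through $-1 \le 0$, and $2$ through $1 \ge 0$), so your hereditary hypothesis fails and your proof says nothing about this $G$. (The example also shows your suspicion about the glossed-over step was well founded: here $G \not\cong K \cong \game{0}{}$ and $G$ has no dominated options at all, so the dichotomy asserted in the paper's own proof---``$G \cong K$, or $K$ is obtained from $G$ by deleting dominated options''---fails as well. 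Your instinct located a real issue; it is only your repair that misses.)

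What closes the gap under the hypothesis as stated is an option-matching lemma that needs nothing about how $K$ is computed from $G$: if $G = H$ and $G^L$ is a non-reversible Left option of $G$, then $G^L \le H^L$ for some Left option $H^L$ of $H$. Indeed, since $G - H = 0$, Right has a winning reply to Left's opening move to $G^L - H$; a reply inside $G^L$ would reach $G^{LR} - H \le 0$, i.e.\ $G^{LR} \le G$, contradicting non-reversibility, so the winning reply is in $-H$ and reaches some $G^L - H^L \le 0$. Apply this in both directions with $H = K$: every $G^L$ is $\le$ some $K^L$ (by the corollary's hypothesis), and every $K^L$ is $\le$ some $G^L$ (since $K$, being canonical, has no reversible options); chaining gives $K^L \le G^L \le K^{L'}$, and since $K$ has no dominated options, $K^{L'} = K^L$, so each $K^L$ equals some $G^L$ in value while each $G^L$ is dominated in value by some $K^L$. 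Dually for Right options. That is exactly the correspondence your second paragraph needs, now established under the top-level hypothesis. The same lemma also rescues the structural claim you were worried about: if the canonical form of a non-reversible $G^L$ had a Right option $r \le G$, then, $r$ being a non-reversible Right option of a game equal to $G^L$, the lemma would give some $G^{LR} \le r \le G$, a contradiction; so canonicalizing surviving options can never create reversibility.
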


\begin{proof}
    If $G$ has no reversible options then either $G$ is its own canonical form ($G\cong K$) at which point the statement is trivial or $G$ has some set of dominated options, whose removal would result in $K$. Thus, \cref{TeqImpDomRed} implies $G \triangleq K$. The result follows immediately.
\end{proof}

\subsection{Fundamental Results About Numbers}

In this subsection we cover some facts about numbers in relation to ordinal sums. We encourage readers to spend some time with this short section as many of the fact presented here are critical to the rest of the paper. 

First, recall that $G$ is a number if $G^L < G < G^R$ for all $G^L \in L(G)$ and $G^R \in R(G)$. We note the following proposition which is well known in the literature. 

\begin{proposition}[Simplicity Theorem]
    If $G = \game{a}{b}$ where $a<b$ are numbers, then $G = x$ where $x$ is the unique number with the smallest birthday on the open interval $(a,b)$.
\end{proposition}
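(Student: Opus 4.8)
The plan is to prove the equality $G = x$ directly from the outcome characterisation, by showing that $G + (-x)$ is a second-player win. Before that I would dispose of the existence and uniqueness of $x$: existence is immediate since $a<b$ guarantees at least one number strictly between them and birthdays are well-ordered, so a least-birthday number in $(a,b)$ exists; uniqueness follows from the standard structural fact that any two distinct numbers of equal birthday bracket a third number of strictly smaller birthday (so two competing minimisers in $(a,b)$ would yield an even simpler number in $(a,b)$, a contradiction). I would then fix the canonical form $x \cong \game{x^L}{x^R}$ and record that, because $x \in (a,b)$, we have $a < x < b$, and because $G \cong \game{a}{b}$ with $a<b$ is a number, the basic inequality $G^L < G < G^R$ noted in Section~2.2 gives $a < G < b$.

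The key structural step is to pin down the canonical options of $x$ relative to $a$ and $b$: I claim $x^L \le a$ whenever a left option exists and $x^R \ge b$ whenever a right option exists. Indeed, if $x^L$ existed with $x^L > a$, then since $x^L < x < b$ we would have $x^L \in (a,b)$; but $x^L$ is born strictly before $x$, contradicting the minimality of the birthday of $x$ in $(a,b)$. The argument for $x^R \ge b$ is symmetric. (When $x$ is an integer or $0$, one or both option sets are empty, and the corresponding bound is simply vacuous; this only deletes cases from the check below.)

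With $a < G < b$, $a < x < b$, $x^L \le a$, and $x^R \ge b$ in hand, I would finish by analysing every first-player move in $G + (-x)$, where $-x \cong \game{-x^R}{-x^L}$, and exhibiting a winning reply for the opponent. If Left moves to $a - x$ then $a - x < 0$ (as $a < x$), and if Left moves to $G - x^R$ then $G - x^R < 0$ (as $G < b \le x^R$); in either case the resulting game is negative, so Right, on move, wins. Symmetrically, if Right moves to $b - x$ then $b - x > 0$ (as $x < b$), and if Right moves to $G - x^L$ then $G - x^L > 0$ (as $G > a \ge x^L$); each is positive, so Left, on move, wins. Thus the second player always wins $G + (-x)$, giving $G + (-x) = 0$ and hence $G = x$.

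The main obstacle is really the bookkeeping around the canonical options of $x$ and the degenerate cases: the whole argument hinges on the bounds $x^L \le a$ and $x^R \ge b$, which encode exactly why $x$ being \emph{simplest} forces the equality, and one must be careful that these bounds are extracted from birthday-minimality rather than from any circular appeal to the theorem being proved, and that the empty-option cases are handled. The use of the inequality $G^L < G < G^R$ and of the outcome dictionary (a positive game is a win for Left on move, a negative game a win for Right on move) is standard and, in a fully formal development, is established together with simplicity by a single induction on birthdays; I would either cite these from Section~2.2 or make that simultaneous induction explicit.
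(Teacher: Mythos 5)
Your proof is correct. Note that the paper does not actually prove this proposition---it states ``A proof is omitted here, but can be found in \cite{siegel2013combinatorial}''---so there is no in-paper argument to compare against; your argument is precisely the standard one from that cited reference: extract the bounds $x^L \le a$ and $x^R \ge b$ for the canonical options of $x$ from birthday-minimality, use $a < G < b$ (a basic property of numbers, not circular), and then check that every first-player move in $G - x$ loses, with the empty-option cases handled vacuously. Your attention to the uniqueness of $x$ (two equal-birthday numbers in $(a,b)$ would bracket a simpler one) and to keeping the simplicity bounds independent of the theorem itself are exactly the points on which informal treatments tend to be sloppy.
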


A proof is omitted here, but can be found in \cite{siegel2013combinatorial}. Next, consider \cref{OptionDominationInOrdSumsOfNumbers}.

\begin{proposition}\label{OptionDominationInOrdSumsOfNumbers}
Let $G$ be a number whose options are numbers and $H$ be any game. If $L(H) \neq \emptyset$, then 
$G^L$ is strictly dominated in $G \os H$.
\end{proposition}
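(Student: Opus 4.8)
The plan is to exhibit, among the Left options of $G\os H$, a single option that is strictly larger than $G^L$, which is exactly what strict domination requires. By definition of the ordinal sum, the Left options of $G\os H$ are $L(G)$ together with the games $\{\, G\os H^L : H^L\in L(H)\,\}$, so the options coming from the subordinate are precisely the $G\os H^L$. Since $L(H)\neq\emptyset$, at least one such option exists, and I will show that every one of them dominates $G^L$. Thus the proposition reduces to the following claim: for the given number $G$ with number options, for every game $K$, and for every $G^L\in L(G)$, one has $G\os K > G^L$. Applying this with $K=H^L$ then yields a Left option $G\os H^L$ of $G\os H$ with $G\os H^L > G^L$, so $G^L$ is strictly dominated.

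To prove the claim I would argue directly that $G\os K - G^L > 0$ by induction on the subordinate $K$, checking that Left wins moving first and that Left wins moving second. If Left moves first she plays the base of $G\os K$ to $G^L$ (a legal Left option of $G\os K$), reaching $G^L - G^L = 0$ with Right to move, so Left wins. If Right moves first I would supply a winning Left reply to each of his options: if Right plays $G\os K\to G^R$ the position is $G^R - G^L$, which is positive because $G$ is a number and hence $G^L < G^R$; if Right plays $G\os K\to G\os K^R$ the position is $G\os K^R - G^L$, which is positive by the induction hypothesis, the subordinate $K^R$ being simpler than $K$; and if Right plays inside the component $-G^L$, moving it to $-(G^L)^L$, then Left replies by moving the base to $G^L$, reaching $G^L - (G^L)^L$, which is positive because $G^L$ is itself a number (the options of $G$ are numbers) and so $(G^L)^L < G^L$. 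Since every Right option is answered, $G\os K - G^L > 0$, establishing the claim; the only recursive call decreases the subordinate, so the induction is well founded and the cases with no options are subsumed.

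The delicate point, and the one I expect to require the most care, is Right's move inside the negated summand $-G^L$. One might hope to avoid the case analysis by proving the stronger-looking statement $G\os K \ge G$ and then invoking $G > G^L$, but this is false in general, since the subordinate can pull the value of $G\os K$ strictly below $G$ (for instance $\tfrac12\os(-1)=\tfrac14 < \tfrac12$). Equally, one cannot simply say that ``a number exceeds its Left options'' applied to $G\os K$, because $K$ is arbitrary and $G\os K$ need not be a number at all. The recursion must therefore be run on the subordinate $K$ while the comparison point $G^L$ is held fixed, and the reply to Right's move in $-G^L$ must be played in the base rather than by mirroring; with those two observations in place, the argument closes uniformly in every case.
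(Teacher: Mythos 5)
Your proposal is correct and takes essentially the same approach as the paper: both reduce the proposition to showing that each subordinate-derived Left option $G \os H^L$ strictly exceeds $G^L$, and both verify this by a case analysis of the difference game in which the decisive replies are made in the base (using $G^L < G^R$ for base moves and the fact that $G^L$ is a number for moves inside $\pm G^L$). The only difference is cosmetic: where you invoke induction on the subordinate to handle the opponent's move $G \os K \to G \os K^R$, the paper has the winner reply at once in the base, which annihilates the subordinate and makes the induction unnecessary — but your inductive version is equally sound.
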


\begin{proof}
We claim for any $H^L \in L(H)$, $G^L - G\os H^L < 0$. Consider the difference $G^L - G\os H^L < 0$. Right wins moving first by moving $-G\os H^L$ to $-G^L$. If Left moves first, then she must play in $G^L$, or $-G\os H^L$. If Left moves $G^L$ to some $G^{LL}$, then $G^{LL}< G^L$ as $G^L$ is a number, implying that Right moving $-G\os H^L$ to $-G^L$ is a winning response for Right. Similarly, if Left moves is the subordinate of $-G\os H^L$, the Right wins moving in the basis to $-G^L$. Finally, if Right moves $-G\os H^L$ to $-G^R$, then this is losing as $G$ is a number implies $G^L < G < G^R$, hence, $G^L-G^R < 0$.
\end{proof}

\begin{corollary}\label{OS1Dom}
If $G$ is number whose options are numbers, then $G \os 1 \triangleq \game{G}{R(G)}$. Similarly $G 
\os -1 \triangleq \game{L(G)}{G}$.
\end{corollary}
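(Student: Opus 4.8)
The plan is to prove $G \os 1 \triangleq \game{G}{R(G)}$ by computing the options of $G \os 1$ directly from the definition of ordinal sum and then applying \cref{OptionDominationInOrdSumsOfNumbers} to simplify. Since $1 \cong \game{0}{\ }$, the definition of ordinal sum gives
\[
G \os 1 \cong \game{L(G),\, G \os 0}{R(G)}
\]
because Left has her options $L(G)$ in the base together with $G \os L(1) = G \os 0$, while Right has only his base options $R(G)$ (as $R(1) = \emptyset$ contributes nothing). Now $G \os 0 \cong \game{L(G)}{R(G)} \cong G$, so in fact
\[
G \os 1 \cong \game{L(G),\, G}{R(G)}.
\]

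The next step is to observe that every $G^L \in L(G)$ is strictly dominated by the single Left option $G$. This is exactly the content of \cref{OptionDominationInOrdSumsOfNumbers}: taking $H = 1$, which has $L(H) = \{0\} \neq \emptyset$, the proposition tells us that each $G^L$ is strictly dominated in $G \os 1$, and the dominating option is precisely $G \os 0 = G$. Thus all the $G^L$ are dominated Left options and may be removed one at a time.

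The conclusion then follows from \cref{TeqImpDomRed}: removing a dominated option yields a game equivalent modulo domination to the original. Removing each $G^L$ in turn (and using transitivity of $\triangleq$, which holds since $\triangleq$ refines equality and the relation is built from a mirroring strategy) leaves exactly $\game{G}{R(G)}$, giving $G \os 1 \triangleq \game{G}{R(G)}$. The symmetric statement $G \os -1 \triangleq \game{L(G)}{G}$ follows by the same argument applied to $-1 \cong \game{\ }{0}$, where now Right's option $G \os 0 = G$ strictly dominates every $G^R$ (via the Left--Right dual of \cref{OptionDominationInOrdSumsOfNumbers}, or equivalently by negating), so each $G^R$ is a dominated Right option that can be removed.

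I expect the only mild subtlety to be the bookkeeping in the domination step: \cref{OptionDominationInOrdSumsOfNumbers} is stated for a single removal, so I should be slightly careful that after deleting one $G^L$ the remaining $G^L$'s are still dominated by $G$ in the smaller game. This is immediate since $G$ remains a Left option and the inequality $G^L - G \os H^L < 0$ established in that proposition does not depend on which other options are present, but it is worth noting explicitly so the iterated application of \cref{TeqImpDomRed} is justified.
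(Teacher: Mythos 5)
Your proof is correct and takes exactly the route the paper intends: the result is stated there as an immediate consequence of \cref{OptionDominationInOrdSumsOfNumbers} (each $G^L$ is strictly dominated by the Left option $G \os 0 \cong G$, and dually each $G^R$ by $G$ in $G \os -1$) together with \cref{TeqImpDomRed}, which is precisely your argument. Your explicit care about removing dominated options one at a time and invoking transitivity of $\triangleq$ is a detail the paper glosses over (it does the same implicit iteration in the proof of \cref{McKay's Theorem}), so nothing is missing.
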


\begin{proposition}\label{Teq=Number}
If the options of $G$ are numbers and both players have at 
least one option, then $G \triangleq \game{a}{b}$ for some numbers $a$ and $b$ in canonical form.
\end{proposition}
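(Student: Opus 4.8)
The plan is to reduce $G$ to its extreme options and then replace those by canonical numbers. Since every option of $G$ is a number and the numbers are totally ordered by value, the finite nonempty set $L(G)$ contains an option $G^L_\ast$ of maximum value and $R(G)$ contains an option $G^R_\ast$ of minimum value. Let $a$ and $b$ be the number canonical forms of $G^L_\ast$ and $G^R_\ast$, so that $a = G^L_\ast$ and $b = G^R_\ast$ as values while $a$ and $b$ are in canonical form. I claim that $G \triangleq \game{a}{b}$, which is exactly the asserted form.

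To prove the claim straight from the definition I would examine the difference $G + \left(-\game{a}{b}\right) = G + \game{-b}{-a}$ and, for each first move, exhibit a winning reply by the second player in the summand the first player did not touch. There are four cases. If Left opens in $G$ with $G \to G^L$, then $G^L \le a$, and Right replies $\game{-b}{-a} \to -a$, leaving the number $G^L - a \le 0$ with Left to move, so Left loses. If Left opens in the other summand with $\game{-b}{-a} \to -b$, then Right replies in $G$ to $G^R_\ast$, reaching $G^R_\ast - b = 0$ with Left to move. The two cases in which Right moves first are mirror images: against $G \to G^R$ (where $G^R \ge b$) Left replies $\game{-b}{-a} \to -b$ to reach $G^R - b \ge 0$ with Right to move, and against $\game{-b}{-a} \to -a$ Left replies in $G$ to $G^L_\ast$ to reach $G^L_\ast - a = 0$ with Right to move. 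In every case the responder leaves a position of the correct sign with the opener on move.

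What makes each reply genuinely winning is that a difference of two numbers is again a number and hence is never fuzzy with $0$; so reaching value $0$ (or the appropriate weak inequality) with the opponent to move is a true loss for that opponent. Two points deserve care. First, the definition of $\triangleq$ demands that the winning reply lie in the \emph{other} summand, and each reply above is chosen precisely for this reason; it is exactly the maximal Left option and the minimal Right option of $G$ that are able to answer the opener's moves in $\game{-b}{-a}$. Second, although $a$ and $b$ are the canonical forms of the extreme options rather than the options themselves, this is harmless: the verification uses only the values of $G^L_\ast$ and $G^R_\ast$, so any games of those values serve equally well.

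I expect the main obstacle to be purely organizational: one must check all four first-move cases and, in particular, confirm that the boundary position of value exactly $0$ is handled with the correct player on move. A more economical-looking route would instead invoke \cref{TeqImpDomRed} to delete every non-maximal Left option and every non-minimal Right option of $G$, reducing to a single option on each side before replacing them by $a$ and $b$; the catch there is that iterating single-option deletions requires knowing that such chains remain within $\triangleq$, a transitivity issue that the direct difference-game argument avoids entirely.
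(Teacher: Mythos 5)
Your proof is correct, but it takes a genuinely different route from the paper's. The paper disposes of this proposition in one sentence: since numbers are totally ordered, every non-maximal Left option and non-minimal Right option of $G$ is dominated, and \cref{TeqImpDomRed} lets one delete them, leaving a two-option game. You instead verify $G \triangleq \game{a}{b}$ directly from the definition, playing out the difference $G + \game{-b}{-a}$ and exhibiting, for each of the four possible opening moves, a winning reply in the other summand; your case analysis is sound (in each case the responder leaves a position that is $\leq 0$ with Left to move, or $\geq 0$ with Right to move). Your route buys two things that the paper's terse argument glosses over. First, iterating \cref{TeqImpDomRed} to delete several dominated options produces a chain of $\triangleq$-equivalences, so concluding $G \triangleq \game{G^L_\ast}{G^R_\ast}$ needs transitivity of $\triangleq$ (or a restatement of the lemma that deletes a whole set of dominated options at once); your single difference-game argument sidesteps this, as you note. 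Second, and more substantively, \cref{TeqImpDomRed} only \emph{removes} options, so it leaves the surviving extreme options in their original literal forms, whereas the statement asks for options in canonical form; passing from $G^L_\ast$ and $G^R_\ast$ to their canonical forms $a$ and $b$ is an extra step not covered by that lemma, and your verification absorbs it automatically because it only ever uses the \emph{values} of the extreme options. The price you pay is length: the paper's appeal to domination is one line, while yours is a four-case check. Both arguments are valid; yours is the more self-contained of the two.
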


\begin{proof}
This follows from \cref{TeqImpDomRed} and that numbers are totally ordered (comparable).
\end{proof}

\subsection{Balls}

In this section we define a special class of games we call balls due to their similarity to intervals in $\mathbb{R}$. We care about these because all numbers whose options are numbers can be reduced to a ball under equivalence modulo and special balls are well behaved summands in an ordinal sum.

We begin by noticing that many games with two options, such as non-integer numbers in canonical form, have the property that the options have a common difference with some game between them. That is if $\frac{a}{2^p}$ where $a = 2b+1$ is in canonical form, then $\frac{a}{2^p} \cong \game{\frac{b}{2^{p-1}}}{\frac{b+1}{2^{p-1}}}$. With this in mind, we say a game $\game{x}{y}$ where $x$ and $y$ are in canonical form is a \emph{ball} of radius $\Delta$ centered at $m$ if $x = m+\Delta$ and $y = m-\Delta$ where $m$ and $\Delta$ are games. We denote such a game by $\interval{m}{\Delta}$ where $m$ is the \emph{midpoint} and $\Delta$ is the \emph{radius}. Thus, the canonical form of $\frac{a}{2^p}$ is $ \interval{\frac{a}{2^p}}{-\frac{1}{2^p}}$.

\begin{table}[h!]
\centering
\begin{tabular}{lcl}
\text{Literal form, $G$} & & Description\\ \hline 
$\game{1}{3*}$ & $G = 2$ & a number \\
$\game{1}{3, 4}$ & $G^{L}< G^{R}$ & a number whose options are in canonical form \\
$\game{1}{5}$ & $1 = 3-2$, $5 = 3 + 2$ & an interval number \\
$\game{1}{3}$ & $1 = \game{1}{3} - 1$, $3 = \game{1}{3} + 1$ & a balanced interval number \\
$\game{1}{}$ & $ \game{1}{} \cong 2$ & a number in canonical form \\
\end{tabular}
\label{tab:number}
\caption{Some forms of the value $2$}
\end{table}

In this paper, we only consider $\Delta$ and $m$ and $\interval{m}{\Delta}$ to be numbers. This implies $\Delta < 0$. Unless otherwise stated assume that every game given using the notation $\interval{m}{\Delta}$ is a number.

A ball $G \cong \game{m+\Delta}{m-\Delta}$ is \emph{balanced} if $G + G = m + m$. Equivalently, a ball $G \cong \game{a}{b}$ is \emph{balanced} if $G + G = a + b$. We say a game (not necessarily a ball) is balanced if it is equivalent modulo domination to a balanced ball. Examples of balanced balls are show in Table~2. That is a game is balanced when its value is the mean of its best options. Rather than giving the definition in this informal way, we insist on the former definition in service of giving a definition that remains well defined when $m,\Delta$ are general games (where division is not defined).

\begin{table}[h!]
	\centering
	$\begin{array}{lcl}
		\text{Interval game} & \text{Literal form} & \text{Value}\\ \hline 
		\interval{\frac{3}{4}}{-\frac{1}{4}} & \game{\frac{1}{2}}{1} & \frac{3}{4}\\
		\interval{2}{-1}					 & \game{1}{3} & 2\\
		\interval{0}{-1} 					& \game{-1}{1} & 0 \\
		\interval{0}{1} 					&  \game{1}{-1} & \pm 1 \\
	\end{array}$
	\label{tab:balls}	
	\caption{Some balanced balls.}
\end{table}

\begin{lemma}\label{BalcendImpliesEQ}
If $m$ and $\interval{m}{\Delta}$ are numbers and $\interval{m}{\Delta}$ is balanced, then $\interval{m}{\Delta} = m$.
\end{lemma}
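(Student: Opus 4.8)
The plan is to unwind the definition of balancedness into a cancellation problem inside the value group $\mathbb{G}$ and then exploit that the quantities involved are numbers. Writing $G = \interval{m}{\Delta} \cong \game{m+\Delta}{m-\Delta}$, the hypothesis that $G$ is balanced is, by definition, precisely the statement $G + G = m + m$; that is, $2G = 2m$ as elements of $\mathbb{G}$. So the entire content of the lemma is to pass from this doubled equation to $G = m$, and no appeal to the literal form of $\interval{m}{\Delta}$ beyond this defining equation will be needed.

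First I would observe that $G - m$ is itself a number. Both $G$ and $m$ are numbers by hypothesis, and the numbers form a subgroup of $\mathbb{G}$ (they are closed under addition and under negation), so their difference is again a number. Rearranging $2G = 2m$ then gives $(G-m) + (G-m) = 0$, i.e. $2(G-m) = 0$, an identity now entirely within the group of numbers.

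Next I would invoke the fact that the numbers form a \emph{totally ordered} abelian group and are therefore torsion-free. Concretely, if $x$ is a nonzero number then either $x > 0$, in which case monotonicity of addition gives $x + x > 0$, or $x < 0$, in which case $x + x < 0$; either way $2x \neq 0$. Applying this to $x = G - m$ together with $2(G-m) = 0$ forces $G - m = 0$, and hence $G = m$, as claimed.

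The step requiring the most care is exactly the cancellation of the factor of $2$: this is where torsion-freeness of the ordered group of numbers is used, and it is the reason the hypothesis that \emph{both} $m$ and $\interval{m}{\Delta}$ are numbers is essential, since it guarantees that $G - m$ lives in a torsion-free group. (For short numbers one may equivalently note that the values lie in $\mathbb{D} \subseteq \mathbb{Q}$, where $2x = 0 \Rightarrow x = 0$ is immediate.) Everything else is a routine rewriting, so the lemma is best read as a statement about the value group rather than about game forms.
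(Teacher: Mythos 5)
Your proof is correct and is essentially the paper's own argument: the paper also rewrites balancedness as $\interval{m}{\Delta}+\interval{m}{\Delta}=m+m$, deduces that $\interval{m}{\Delta}-m$ is a number equal to its own negative, and concludes it is zero --- which is exactly your torsion-freeness step, phrased as $x=-x\Rightarrow x=0$ rather than $2x=0\Rightarrow x=0$. No gaps; your explicit appeal to the total order on numbers just makes the final cancellation more transparent than the paper's one-line version.
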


\begin{proof}
As the game is balanced, from the definition $\interval{m}{\Delta} + \interval{m}{\Delta} = m + m$. As this implies  $\interval{m}{\Delta} -  m = -(\interval{m}{\Delta} -m)$ which is a number, it must be that $\interval{m}{\Delta} -  m = 0$. Hence $\interval{m}{\Delta} = m$ as required.
\end{proof}

\begin{proposition}\label{prop: canonical radius}
If $\interval{m}{\Delta}$ is balanced and $m = \frac{a}{2^p} \neq 0$ where $a$ is balanced or $p=0$, 
then $ -\frac{1}{2^p} \le \Delta < 0$.
\end{proposition}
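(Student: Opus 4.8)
The plan is to route everything through the Simplicity Theorem. First I would invoke \cref{BalcendImpliesEQ}: since $\interval{m}{\Delta}$ is balanced and $m$ is a number, the ball equals its midpoint, so the form $\game{m+\Delta}{m-\Delta}$ has value $m$. Recall also that, by the standing convention that balls are numbers, $m+\Delta < m-\Delta$ and hence $\Delta < 0$; so the upper bound in the conclusion is immediate, and the entire content is the lower bound $\Delta \ge -\frac{1}{2^p}$.

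Next I would apply the Simplicity Theorem to $\game{m+\Delta}{m-\Delta}$. Its options are numbers with $m+\Delta < m-\Delta$, so its value is the unique number of least birthday in the open interval $(m+\Delta,\, m-\Delta)$. As that value is $m$, the number $m$ must be the simplest number in this interval, which is symmetric about $m$ with half-width $-\Delta$. The task is thus to determine how wide such a symmetric interval can be before it swallows a number strictly simpler than $m$.

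Here the hypothesis ``$a$ balanced or $p=0$'' is exactly what guarantees that $\frac{a}{2^p}$ is written in lowest terms, and this is where two cases appear. When $p \ge 1$ (so $a$ is odd), the construction of numbers recalled in the introduction gives that the canonical form of $m=\frac{a}{2^p}$ is $\game{\frac{a-1}{2^p}}{\frac{a+1}{2^p}}$; both options can be rewritten with denominator $2^{p-1}$, so each of $\frac{a-1}{2^p}$ and $\frac{a+1}{2^p}$, being an option of the canonical form of $m$, has strictly smaller birthday than $m$, and each sits at distance $\frac{1}{2^p}$ from $m$. When $p=0$, the number $m=a$ is a nonzero integer whose neighbour towards $0$ (namely $a-1$ if $a \ge 1$, or $a+1$ if $a \le -1$) is again strictly simpler and sits at distance $1 = \frac{1}{2^0}$ from $m$.

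Finally I would close by contradiction. Suppose $-\Delta > \frac{1}{2^p}$. Then $m+\Delta < \frac{a-1}{2^p}$ and $m-\Delta > \frac{a+1}{2^p}$, so the open interval $(m+\Delta,\, m-\Delta)$ contains in its interior a number strictly simpler than $m$, contradicting that $m$ is simplest there. Hence $-\Delta \le \frac{1}{2^p}$, that is $\Delta \ge -\frac{1}{2^p}$, giving $-\frac{1}{2^p} \le \Delta < 0$. The point needing care is the boundary: at $-\Delta = \frac{1}{2^p}$ the endpoints are precisely the simpler parents $\frac{a\pm 1}{2^p}$, but the interval is open so they are excluded and $m$ stays simplest — which is why the bound is the non-strict $\Delta \ge -\frac{1}{2^p}$. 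I expect the main obstacle to be the bookkeeping in the integer case $p=0$, where the canonical form is one-sided and only the neighbour towards $0$ is strictly simpler, so the symmetric interval must be analysed directly rather than by appeal to a two-sided parent structure.
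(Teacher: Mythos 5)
Your proposal is correct and takes essentially the same route as the paper's proof: both invoke \cref{BalcendImpliesEQ} to identify the balanced ball with its midpoint $m$, then apply the Simplicity Theorem to the open interval $(m+\Delta,\,m-\Delta)$ and derive a contradiction from $\Delta < -\frac{1}{2^p}$ using the neighbours $\frac{a-1}{2^p}$ and $\frac{a+1}{2^p}$, at least one of which is simpler than $m$. Your explicit case split between $p=0$ and $p\ge 1$ just spells out what the paper compresses into the phrase ``at least one of which has a smaller birthday.''
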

\begin{proof}
Recall that $\Delta < 0$ is assumed. So if the statement is false for some balanced $\interval{m}{\Delta}$, then $\Delta < -\frac{1}{2^p}$. Recall that $\interval{m}{\Delta} \cong \game{m + \Delta}{m-\Delta} = x$ where $x$ is the number with the smallest birthday on the interval $(m + \Delta, m - \Delta)$. Thus, if $\Delta < -\frac{1}{2^p}$, then $\frac{a-1 }{2^p}, \frac{a +1 }{2^j} \in (m + \Delta, m - \Delta)$ at least one of which has a smaller birthday than $m = \frac{a}{2^j}$. Then $\interval{m}{\Delta} \neq m$ contradicting the fact that  $\interval{m}{\Delta}$ is balanced implies $\interval{m}{\Delta} = m$ by \cref{BalcendImpliesEQ}. Given this contradiction, it must be the case that $ -\frac{1}{2^p} \le \Delta < 0$ as required.
\end{proof}

To formalize the language of this proposition, we say that the \emph{canonical radius} of number is $1$ if the game is an integer and the radius of its canonical form otherwise. Then the canonical radius of $m = \frac{n}{2^j}$ is $ -\frac{1}{2^j}$. This notation is adopted as \cref{prop: canonical radius} has shown that the radius of a balanced number is bounded above by its canonical radius.

\section{Numbers as Towers}\label{sec:towers}

Forms, and thus also values, from \Hackenbush strings arise in \TT{}. However, there are forms in \TT{} that do not occur in \Hackenbush. An example is \cref{SumOfTowers}. \cref{LeaningTower} constructs a family of examples.

In this section we describe the form that games in \TT{} can take, in particular the forms of a single tower. As recovering the form of a sum of games whose options are well understood is not difficult, this is sufficient to give a general description of the forms of a position in \TT.

\begin{lemma}\label{n-nTeq1-1}
Let $n$ be any non-zero integer. Then $n - n \triangleq 1 - 1 \cong \game{-1}{1}$.
\end{lemma}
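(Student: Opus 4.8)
The plan is to verify the definition of equivalence modulo domination directly, after reducing to the case $n>0$. Since disjunctive addition is commutative at the level of literal forms, $n - n \cong (-n) - (-n)$; hence it suffices to treat $n>0$, because for $n<0$ we may replace $n$ by $-n>0$ and obtain literally the same game.

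First I would pin down the literal form of $A := n - n = n + (-n)$ for $n>0$. Using the canonical integer forms $n \cong \game{n-1}{}$ and $-n \cong \game{}{1-n}$, the addition rule shows that $A$ has exactly one Left option, $A^L \cong (n-1) + (-n)$, and exactly one Right option, $A^R \cong n + (1-n)$; these are compound games of value $-1$ and $1$ respectively, but \emph{not} literally $\pm 1$, which is precisely the content of the lemma. I then set $B := \game{-1}{1}$ and note that $-B \cong B$, since negation swaps and negates the two options of $B$.

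Next I would analyse the difference $A + (-B) = A + B$ against the $\triangleq$-condition, namely that every first move is met by a winning reply of the opponent in the \emph{other} summand. There are four openings. If Left plays $A \to A^L$, Right replies $B \to 1$, reaching $A^L + 1$ of value $0$ with Left to move; if Left plays $B \to -1$, Right replies $A \to A^R$, reaching $A^R + (-1)$ of value $0$ with Left to move. Symmetrically, Right's opening $A \to A^R$ is answered by Left with $B \to -1$, and Right's opening $B \to 1$ is answered by Left with $A \to A^L$, each reaching a value-$0$ position with Right to move. In every case the responder lands in a position of value $0$ with the original mover on the move, which is a loss for that mover; so the reply is winning and lies in the complementary summand. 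This is exactly the definition of $A \triangleq B$, giving $n - n \triangleq \game{-1}{1}$.

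The routine part is the arithmetic $A^L + 1 = 0$ and $A^R + (-1) = 0$ in the group $\mathbb{G}$. The only point requiring care is the interplay between \emph{literal form} and value: the options $A^L, A^R$ are not $\pm 1$ on the nose, so I cannot simply assert $A \cong \game{-1}{1}$; I must carry them as the sums $(n-1)+(-n)$ and $n+(1-n)$ throughout, invoking only that their values are $-1$ and $1$ when evaluating the positions reached after each reply. I expect this bookkeeping, together with correctly reading the $\triangleq$-definition as demanding that the second player's response be in the other summand, to be the sole obstacle; no induction on $n$ is needed.
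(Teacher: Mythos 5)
Your proof is correct and follows essentially the same route as the paper: compute the literal form $n + (-n) \cong \game{(n-1)+(-n)}{\,n+(1-n)}$ and then verify the $\triangleq$-condition against $\game{-1}{1}$ by responding in the other summand to reach value-$0$ positions, which you spell out more explicitly than the paper does. (One inessential quibble: your remark that $A^L, A^R$ are never literally $\pm 1$ fails for $n=1$, where $0+(-1) \cong -1$ and $1+0 \cong 1$, but your argument never relies on it.)
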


\begin{proof}
Consider the difference $n - n$. Then, \[ n - n \cong 
\game{n-1}{} + \game{}{-n + 1} \cong \game{n - 1 - n }{ n - n + 1} \triangleq \game{-1 }{ 1} \cong 
1 - 1.\] This is the desired result.
\end{proof}

Due to this the game $1 - 1$ is surprisingly significant for teetering towers, particularly those of value $0$. Thus, 
we denote the game $1 - 1$ by the special name $\underline{0}$.
 By the Colon Principle it is ease to see that towers of value $0$ are exactly those where each story has value $0$.

\begin{lemma}\label{n-mTeq0+k}
Let $n,m$ be non-zero integers such that $n > m$. If $n - m = k$, then $n - m \triangleq \game{k-1}{k + 1} \triangleq k + \underline{0} $.
\end{lemma}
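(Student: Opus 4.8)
The plan is to verify the two-step chain $n-m \triangleq \game{k-1}{k+1} \triangleq k+\underline{0}$ link by link and then close it up using transitivity of $\triangleq$. Throughout I read the hypotheses with $n,m\ge 1$, matching the convention already used in the proof of \cref{n-nTeq1-1} (where the canonical forms $n\cong\game{n-1}{}$ and $-m\cong\game{}{-m+1}$ appear) and the setting of a story of a tower; this is the only case in which $n-m$ presents an option to each player, so that $k=n-m\ge 1$ and both $n-m$ and $k+\underline{0}$ fall under the hypothesis of \cref{Teq=Number} (options that are numbers, with both players able to move).

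For the first link I would expand the disjunctive sum in canonical form,
\[ n-m \cong \game{n-1}{}+\game{}{-m+1}, \]
and read off the options. Left's only move is in the first summand, to $(n-1)+(-m)$, a number of value $k-1$; Right's only move is in the second summand, to $n+(-m+1)$, a number of value $k+1$. Since every option is a number and both players move, \cref{Teq=Number} (keeping the dominating Left and Right options) gives $n-m \triangleq \game{k-1}{k+1}$.

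For the second link I would recall $\underline{0}\cong\game{-1}{1}$ and expand $k+\underline{0}$ with $k\cong\game{k-1}{}$. Left then has two options, namely $(k-1)+\underline{0}$ and $k+(-1)$, both numbers of value $k-1$, while Right has the single option $k+1$ of value $k+1$. Again all options are numbers and both players move, so \cref{Teq=Number} collapses the two (equal, hence mutually dominating) Left options and yields $k+\underline{0}\triangleq\game{k-1}{k+1}$. Transitivity of $\triangleq$ then delivers the full statement.

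I expect the difficulty to be entirely bookkeeping rather than conceptual: one must keep the literal options of a disjunctive sum distinct from their values and confirm that the surviving best Left/Right options really have values $k-1$ and $k+1$. The one spot needing a little care is the second link, where the two Left options of $k+\underline{0}$ must be recognized as equal numbers so that \cref{Teq=Number} (via \cref{TeqImpDomRed}) may discard one of them; beyond that, the proof rests only on the positivity convention $n,m\ge 1$ used to write the canonical forms above.
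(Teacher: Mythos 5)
Your proof is correct and takes essentially the same route as the paper's: both arguments expand $n-m$ and $k+\underline{0}$ into their canonical-form summands ($\game{n-1}{}+\game{}{-m+1}$ and $\game{k-1}{}+\game{-1}{1}$), read off that the surviving Left and Right options have values $k-1$ and $k+1$, and collapse both games onto $\game{k-1}{k+1}$ by domination/mirroring --- the paper does this in a single displayed chain, leaving implicit the appeal to \cref{Teq=Number} and the transitivity of $\triangleq$ that you spell out. Your reading $n,m\ge 1$ also matches the convention the paper's own proof adopts when it writes $n \cong \game{n-1}{}$ and $-m \cong \game{}{-m+1}$.
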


\begin{proof}
The proof is similar to the proof of \cref{n-nTeq1-1},
\[ n - m \cong \game{n-1}{} + \game{}{-m + 1} \cong \game{n - 1 - m }{ n - m + 1} \triangleq 
\game{k-1 }{k + 1} \triangleq \game{k-1}{} + \game{-1}{1} \cong k + \underline{0}.\]
Implying the desired equality.
\end{proof}

For convenience we adopt the notation $\underline{k} \cong k+ \underline{0}$. Notice that every story of a teetering tower is an integer $k$ in canonical form or is equivalent modulo domination to $\underline{k}$ for some $k$. Given this it is useful to consider the following ordinal sum.

\begin{lemma}\label{underline0 on Top}
    If $\interval{m}{-\frac{1}{2^p}}$ is balanced, then 
    \[
    \interval{m}{-\frac{1}{2^p}}: \underline{0} \triangleq \interval{m}{-\frac{1}{2^{p+1}}}.
    \]
    which is also balanced.
\end{lemma}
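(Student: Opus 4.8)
The plan is to expand the ordinal sum into its literal form, strip away the dominated options, and recognize what survives as $\interval{m}{-\frac{1}{2^{p+1}}}$. Write $G \cong \interval{m}{-\frac{1}{2^p}}$, whose canonical options are $m - \frac{1}{2^p}$ and $m + \frac{1}{2^p}$, and recall $\underline{0} \cong \game{-1}{1}$. The ordinal-sum formula then gives
\[
G \os \underline{0} \cong \game{m - \frac{1}{2^p},\ G \os -1}{m + \frac{1}{2^p},\ G \os 1}.
\]
First I would evaluate the two ``delayed'' options. Since $G$ is balanced, \cref{BalcendImpliesEQ} gives $G = m$, and \cref{OS1Dom} yields $G \os 1 \triangleq \game{G}{m + \frac{1}{2^p}} = \game{m}{m + \frac{1}{2^p}}$ together with the symmetric statement $G \os -1 \triangleq \game{m - \frac{1}{2^p}}{m}$.

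To turn these into numbers I would invoke the Simplicity Theorem, and this is exactly where \cref{prop: canonical radius} is needed: because $G$ is balanced with radius $-\frac{1}{2^p}$, that proposition forces $m = \frac{a}{2^p}$ for some integer $a$, so $m$ and $m + \frac{1}{2^p}$ are consecutive multiples of $\frac{1}{2^p}$ and the simplest number strictly between them is $m + \frac{1}{2^{p+1}}$; symmetrically $G \os -1 = m - \frac{1}{2^{p+1}}$. Comparing values, $m - \frac{1}{2^p} < m - \frac{1}{2^{p+1}}$ and $m + \frac{1}{2^{p+1}} < m + \frac{1}{2^p}$, so for Left the option $m - \frac{1}{2^p}$ is dominated by $G \os -1$ and for Right the option $m + \frac{1}{2^p}$ is dominated by $G \os 1$. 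Removing both via \cref{TeqImpDomRed} (and transitivity of $\triangleq$) leaves $G \os \underline{0} \triangleq \game{G \os -1}{G \os 1}$. The main obstacle is the final identification: the options $G \os \pm 1$ are not in canonical form, whereas $\interval{m}{-\frac{1}{2^{p+1}}} \cong \game{m - \frac{1}{2^{p+1}}}{m + \frac{1}{2^{p+1}}}$ has canonical options of the same values. Since $G \os -1$ and $G \os 1$ are numbers, \cref{Teq=Number} (which replaces number options by their canonical forms up to $\triangleq$) gives $\game{G \os -1}{G \os 1} \triangleq \interval{m}{-\frac{1}{2^{p+1}}}$; if one prefers a self-contained argument, one checks directly in the difference game that $\game{c}{d} \triangleq \game{c'}{d'}$ whenever $c = c'$ and $d = d'$ as values, the responding player always mirroring in the other summand to reach a position of value $0$.

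Finally, balancedness of $\interval{m}{-\frac{1}{2^{p+1}}}$ is immediate from the definition: its value is $m$ (the simplest number in $\left(m - \frac{1}{2^{p+1}}, m + \frac{1}{2^{p+1}}\right)$, using $m = \frac{a}{2^p}$ once more), while the sum of its two canonical options is $\left(m - \frac{1}{2^{p+1}}\right) + \left(m + \frac{1}{2^{p+1}}\right) = 2m$. Hence $\interval{m}{-\frac{1}{2^{p+1}}} + \interval{m}{-\frac{1}{2^{p+1}}} = 2m$ equals the sum of the options, which is precisely the balanced condition. Chaining the equivalences by transitivity of $\triangleq$ completes the argument.
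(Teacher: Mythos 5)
Your proposal is correct and takes essentially the same route as the paper's proof: expand $\interval{m}{-\frac{1}{2^p}} \os \underline{0}$, discard the two base options as dominated, evaluate $\interval{m}{-\frac{1}{2^p}} \os \pm 1$ via \cref{OS1Dom} and simplicity, and recognize the result as $\interval{m}{-\frac{1}{2^{p+1}}}$. The only differences are matters of explicitness: the paper's terse $\triangleq$-chain leaves implicit the two points you spell out, namely that balancedness (via \cref{prop: canonical radius}) forces $m$ to be a multiple of $\frac{1}{2^p}$, and the mirroring argument allowing equal-valued options to be exchanged under $\triangleq$ --- and your self-contained version of the latter is the right justification, since \cref{Teq=Number} on its own does not quite deliver that step.
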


\begin{proof}
    We compute the given sum directly,
\begin{center}
\begin{align*}
    \interval{m}{-\frac{1}{2^p}}: \underline{0} \triangleq \game{\interval{m}{-\frac{1}{2^p}}: -1}{ \interval{m}{-\frac{1}{2^p}}: 1}\\
    \triangleq \game{\game{m-\frac{1}{2^p}}{m}}{\game{m}{m+\frac{1}{2^p}}}\\
    \triangleq \game{\interval{m-\frac{1}{2^{p+1}}}{-\frac{1}{2^{p+1}}}}{\interval{m+\frac{1}{2^{p+1}}}{-\frac{1}{2^{p+1}}}}\\
    \triangleq \game{m-\frac{1}{2^{p+1}}}{m+\frac{1}{2^{p+1}}}
    \triangleq \interval{m}{-\frac{1}{2^{p+1}}}
\end{align*}
\end{center}
as required. Observe that $\interval{m}{-\frac{1}{2^{p+1}}} = m$ as $m$ is the simplest number on the interval $(m- \frac{1}{2^p}, m+ \frac{1}{2^p})$ by assumption that $\interval{m}{-\frac{1}{2^p}}$ is balanced, which implies that $m$ is the simplest number on $(m- \frac{1}{2^{p+1}}, m+ \frac{1}{2^{p+1}})$ as well. Now $\interval{m}{-\frac{1}{2^{p+1}}} = m$ implies $\interval{m}{-\frac{1}{2^{p+1}}}$ is balanced by definition.
\end{proof}

Notice that if $\interval{m}{-\frac{1}{2^p}}$ is not balanced, then we cannot suppose $\interval{m}{-\frac{1}{2^{p}}}\os -1 = m-\frac{1}{2^{p+1}}$ or $\interval{m}{-\frac{1}{2^{p}}}\os 1 = m+\frac{1}{2^{p+1}}$. As a result the assumption of balancedness in Lemma~\ref{underline0 on Top} cannot be relaxed.

In \Hackenbush{} all numbers appear as stalks and this construction relies on edges of both colors. Thus, there must be summands which are both positive and negative to form each number as a \Hackenbush\ stalk. In a tower, each non-zero story has a player with more of their color. We can build a tower of any given value where each story has at least as many bricks for the same player.

\begin{theorem}\label{LeaningTower}
Let $x \in \mathbb{D}$ where $x \geq 0$, then there exists a tower $T = x$ where for all stories 
$S$ of $T$, $S \geq 0$.
\end{theorem}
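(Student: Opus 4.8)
The plan is to build the tower so that, after passing to its equivalent-modulo-domination form, it becomes the balanced ball $\interval{x}{-\frac{1}{2^p}}$; since a balanced ball equals its midpoint by \cref{BalcendImpliesEQ}, this forces the value to be $x$. The available non-negative stories, read through \cref{n-nTeq1-1} and \cref{n-mTeq0+k}, give exactly the building blocks I want: the integer story $1\cong\game{0}{}$, the story $1+(-1)\triangleq\underline{0}$, and for the base the story $(n+1)+(-1)\triangleq\underline{n}$, each of which has value $\ge 0$. Crucially $\underline{n}\triangleq\interval{n}{-1}$ is a balanced ball, so I have a balanced starting point. Because only non-negative stories are allowed, from a balanced ball I will only ever be able to hold the value fixed (via $\underline{0}$) or increase it (via $1$); since $x\ge 0$ and I start at $\lfloor x\rfloor$ and climb up to $x$, this is precisely what is needed.

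The one genuinely new computation, and the step I expect to be the crux, is a companion to \cref{underline0 on Top} for the story $1$: I claim that if $\interval{m}{-\frac{1}{2^j}}$ is balanced then
\[
\interval{m}{-\tfrac{1}{2^j}} \os 1 \;\triangleq\; \interval{m + \tfrac{1}{2^{j+1}}}{-\tfrac{1}{2^{j+1}}},
\]
and the right-hand side is again balanced. This runs parallel to the proof of \cref{underline0 on Top}: \cref{OS1Dom} gives $\interval{m}{-\frac{1}{2^j}} \os 1 \triangleq \game{\interval{m}{-\frac{1}{2^j}}}{m + \frac{1}{2^j}}\triangleq \game{m}{m + \frac{1}{2^j}}$, and $\game{m}{m+\frac{1}{2^j}}$ is literally the ball $\interval{m + \frac{1}{2^{j+1}}}{-\frac{1}{2^{j+1}}}$. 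Balancedness then follows from the Simplicity Theorem: when $m$ has denominator dividing $2^j$, the simplest number strictly between $m$ and $m+\frac{1}{2^j}$ is the midpoint $m+\frac{1}{2^{j+1}}$, so the ball equals its midpoint. Verifying this simplest-number claim, and hence that balancedness (on which both \cref{underline0 on Top} and \cref{OS1Dom} implicitly rely) is preserved at every stage, is the main thing to get right.

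With both moves in hand I would give an explicit construction. Write $x = n + f$ with $n=\lfloor x\rfloor \ge 0$ and $f=\sum_{k=1}^{p} c_k 2^{-k}\in[0,1)$ its binary expansion, $c_k\in\{0,1\}$, and take
\[
T \;=\; \underline{n}\,\os\, S_1 \os \cdots \os S_p, \qquad \text{with } S_k = \begin{cases} 1 & c_k = 1,\\ \underline{0} & c_k = 0.\end{cases}
\]
I would then prove by induction on $t$ that the truncation $\underline{n}\os S_1\os\cdots\os S_t$ is equivalent modulo domination to the balanced ball $\interval{m_t}{-\frac{1}{2^t}}$, where $m_t = n + \sum_{k\le t} c_k 2^{-k}$ has denominator dividing $2^t$: the base is the case $t=0$ since $\underline{n}\triangleq\interval{n}{-1}$, and the inductive step applies \cref{underline0 on Top} when $c_{t+1}=0$ and the companion claim above when $c_{t+1}=1$, with the passage through the ordinal sum justified by \cref{TeqBaseImpOSTeq} and transitivity of $\triangleq$. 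At $t=p$ this yields $T\triangleq\interval{x}{-\frac{1}{2^p}}$, whence $T=x$ by \cref{BalcendImpliesEQ}, while every story of $T$ is one of $\underline{n}$, $1$, or $\underline{0}$ and is therefore $\ge 0$. The integer case $p=0$ is just the single base story $\underline{n}$.
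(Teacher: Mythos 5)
Your proposal is correct and follows essentially the same route as the paper: the tower you construct (bottom story $\underline{n}$, then a story $1$ or $\underline{0}$ for each binary digit of $x-\lfloor x\rfloor$) is exactly the tower the paper's recursion produces, and both arguments show by induction that the tower is equivalent modulo domination to a balanced ball centered at $x$, using \cref{underline0 on Top} together with an \cref{OS1Dom}-based computation for the stories equal to $1$. The only difference is organizational: you induct digit-by-digit from the bottom and isolate the ``$\os 1$'' step as a companion lemma, whereas the paper inducts on the exponent of the denominator, appending a whole block $\underline{0}\os\cdots\os\underline{0}\os 1$ at each step and proving your companion claim inline in its displayed chain of equivalences.
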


\begin{proof}
We prove this by induction on the exponent in the denominator of the value, but we are showing that there is a tower that is equivalent modulo domination to ball centered at the value with its canonical radius.
When $x\geq0$ is an integer take $T \cong \underline{x}\triangleq \interval{x}{-1} = x$. Suppose that $x = \frac{a}{2^p}$ is a smallest counter example; if $y = \frac{b}{2^q}$ where $q<p$ there is a tower $T_y$ which is equivalent modulo domination to $\interval{y}{r}$ where $r$ is the canonical radius of $y$, such that every story of $T_y$ is non-negative.

As $x$ is a smallest counter example, $x$ is not an integer. Then canonical form of $x$ is $\game{x - 2^{-p}}{x+ 2^{-p}} \cong \interval{x}{-\frac{1}{2^p}}$. 
Notice that $z = x - 2^{-p} = \frac{a-1}{2^p} = \frac{c}{2^{p-l}}$ for some odd number $c = \lfloor \frac{a}{2^l} \rfloor$ if $p-l > 0$ and $z = \lfloor x \rfloor$ otherwise. By the minimality of $x$, there is a tower $T_z$ equivalent modulo domination to $\interval{z}{r}$ where $r = \frac{1}{2^{p-l}}$ is the canonical radius of $z$ such that every story of $T_z$ is non-negative.

We claim $T \cong T_z:(\bigodot_{i=1}^{l-1} \underline{0}):1 = x$. Observe that by induction and Lemma~\ref{underline0 on Top},
\begin{center}
\begin{align*}
    T \triangleq \interval{z}{r}:(\bigodot_{i=1}^{l-1} \underline{0}):1
    \triangleq \interval{z}{\frac{r}{2^{l-1}}}:1 \triangleq \interval{z}{-\frac{1}{2^{p-1}}}:1
   \cong \game{z- \frac{1}{2^{p-1}}}{z+ \frac{1}{2^{p-1}}} :1\\
    \triangleq \game{z}{z+ \frac{1}{2^{p-1}}}
    \triangleq \game{x-\frac{1}{2^p}}{x-\frac{1}{2^p}+ \frac{1}{2^{p-1}}}
    \triangleq \game{x - \frac{1}{2^p}}{x+\frac{1}{2^p}} 
    \cong \interval{x}{-\frac{1}{2^p}} = x
\end{align*}
\end{center}

This completes the proof.
\end{proof}

\begin{corollary}\label{canonicalTower}
If $x \in \mathbb{D}$ is a non-integer in canonical form, then there exists a tower $T \triangleq x$ where for all stories 
$S$ of $T$, $S \cong \underline{0}$ or $S \cong 1$ or $S \cong \underline{k}$ ($k>0$). Here $S \cong \underline{k}$ only if $S$ is the bottom story of the tower $T$.
\end{corollary}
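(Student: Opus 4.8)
The plan is to read the conclusion off directly from the tower $T$ constructed in the proof of \cref{LeaningTower}, rather than building a fresh tower. Recall that for a non-integer $x = \frac{a}{2^p}$ in canonical form, that proof sets $z = x - 2^{-p}$ and produces
\[
T \cong T_z \os \left(\bigodot_{i=1}^{l-1}\underline{0}\right) \os 1,
\]
where $T_z$ is the tower supplied by induction for the number $z$, whose canonical denominator exponent is strictly smaller. Since \cref{LeaningTower} already establishes $T \triangleq \interval{x}{-\frac{1}{2^p}}$, and $\interval{x}{-\frac{1}{2^p}}$ is precisely the canonical form of the non-integer $x$, the relation $T \triangleq x$ comes for free; what remains is purely to describe the stories of $T$.

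I would argue the story structure by induction on $p$, strengthening the claim to: the bottom story of $T$ is $\underline{\lfloor x\rfloor}$ and every other story is $\underline{0}$ or $1$. In the recursive step the only stories appended above $T_z$ are the $l-1$ copies of $\underline{0}$ and a single $1$, all of which are permitted forms, and the base of $T$ coincides with the base of $T_z$. So the induction reduces entirely to tracking what happens to the bottom story, and the base case is the integer clause $T \cong \underline{x}$ of \cref{LeaningTower}, which is a single story $\underline{\lfloor x\rfloor}$.

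The key arithmetic point, and the one requiring care, is that the floor is preserved at each recursive call. Because $a$ is odd, the fractional part of $x$ is at least $2^{-p}$, so $z = x - 2^{-p}$ satisfies $\lfloor x\rfloor \le z < \lfloor x\rfloor + 1$, giving $\lfloor z\rfloor = \lfloor x\rfloor$. Hence every call keeps the same floor until the recursion terminates at the integer $z = \lfloor x\rfloor$. Threading this through, the bottom story of $T$ is $\underline{\lfloor x\rfloor}$: this is $\underline{0}$ when $0 \le x < 1$ and $\underline{k}$ with $k = \lfloor x\rfloor > 0$ when $x \ge 1$, and in either case no story $\underline{k}$ with $k>0$ occurs above the base. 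This is exactly the claimed form, so the corollary follows. I expect the main obstacle to be only the bookkeeping of the floor through the recursion together with confirming that the integer base case contributes precisely one story; everything else is a transcription of the \cref{LeaningTower} construction.
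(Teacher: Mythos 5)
Your proposal is correct and takes essentially the same route as the paper: \cref{canonicalTower} is stated there without a separate proof, intended to be read off from the tower built in the proof of \cref{LeaningTower}, which is precisely what you do, and your floor-preservation argument ($\lfloor z\rfloor=\lfloor x\rfloor$ because $a$ is odd, so the recursion's terminal integer base case contributes the single bottom story $\underline{\lfloor x\rfloor}$) supplies exactly the bookkeeping the paper leaves implicit. One caveat, inherited from the paper rather than introduced by you: the argument (like \cref{LeaningTower} itself) covers positive $x$ only, since a tower all of whose stories are $\underline{0}$, $1$, or $\underline{k}$ with $k>0$ has non-negative value, so for negative non-integers the statement can only hold with the story forms negated.
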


Aside from being quite a nice aesthetic result \cref{canonicalTower} has some piratical applications. In particular when analysing playing proofs involving ordinal sums (of the form $x\os y$) can be tedious if $y$ is expressed as a long \textsc{Hackenbush} string. This is because it is often convenient to consider $a \os b \cong x \os y$ where $a = x\os c$ and $y = c \os b$ rather than $x$ and $y$ directly and when $y$ is a \textsc{Hackenbush} string it can often be the case that the sign of $b$ switches erratically as play progresses. This will not be the case if $y$ is a tower in the form of \cref{canonicalTower}, as each story (except perhaps the bottom story) has value $0$ or $1$ and whenever Right moves in a story of value $0$, the value of that story becomes $1$. As a result, there are fewer cases to consider when analysing such games.

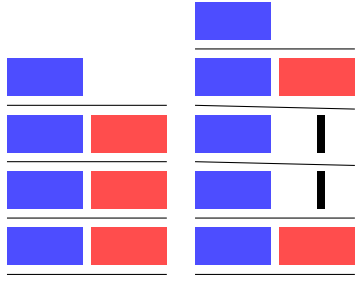
\begin{figure}[h!]
 \centering
\scalebox{0.5}{ 
    \begin{tikzpicture}
     
\draw[thick,black] (0,0) -- (4.25,0);
 
	\fill[blue!70] (0,0.25) rectangle (2,1.25);

	\fill[red!70] (2.25,0.25) rectangle (4.25,1.25);

\draw[thick,black] (0,1.5) -- (4.25,1.5);	

	\fill[blue!70] (0,1.75) rectangle (2,2.75);

	\fill[red!70] (2.25,1.75) rectangle (4.25,2.75);

\draw[thick,black] (0,3) -- (4.25,3);	

	\fill[blue!70] (0,3.25) rectangle (2,4.25);

	\fill[red!70] (2.25,3.25) rectangle (4.25,4.25);

\draw[thick,black] (0,4.5) -- (4.25,4.5);	

	\fill[blue!70] (0,4.75) rectangle (2,5.75);


 \draw[thick,black] (5,0) -- (9.25,0);
 
	\fill[blue!70] (5,0.25) rectangle (7,1.25);

	\fill[red!70] (7.25,0.25) rectangle (9.25,1.25);
 
 \draw[thick,black] (5,1.5) -- (9.25,1.5);	

 	\fill[blue!70] (5,1.75) rectangle (7,2.75);

  	 \fill[black] (8.25,1.75) rectangle (8.45,2.75);
	 
\draw[thick,black] (5,3) -- (9.25,2.9);

 	\fill[blue!70] (5,3.25) rectangle (7,4.25);

  	 \fill[black] (8.25,3.25) rectangle (8.45,4.25);
	 
\draw[thick,black] (5,4.5) -- (9.25,4.4);

	\fill[blue!70] (5,4.75) rectangle (7,5.75);

	\fill[red!70] (7.25,4.75) rectangle (9.25,5.75);

\draw[thick,black] (5,6) -- (9.25,6);	

	\fill[blue!70] (5,6.25) rectangle (7,7.25);
    \end{tikzpicture}
  }
\caption{A tower with value $\frac{1}{8}$ and a tower with value $\frac{13}{16}$ as in \cref{canonicalTower}.\label{fig:AllPositiveTower}}
\end{figure}

\section{Values of Ordinal Sums}

Unlike Section~\ref{sec:towers} this section is not concerned with a particular rule set. Rather we are interested in determining the values of ordinal sums independent of the rule set where they arise. A natural first case to consider is ordinal sums equal to $0$. Notice that as numbers are totally ordered the Colon Principle implies that an ordinal sum of numbers is equal to $0$ if and only if each summand is equal to $0$. Hence, we will focus primarily on non-zero ordinal sums as sums equal to $0$ are easy to characterize. But before moving on we note the following example of an ordinal sum with value $0$ as it provides an important insight about balanced games and is also a good exercise in computing ordinal sums for the reader; 

\begin{center}
\begin{align*}
\game{-1/2}{1} \os \game{-1}{1/2} \triangleq \game{\game{-1/2}{1}:-1}{\game{-1/2}{1}:\frac{1}{2}} \\
\triangleq \game{\game{-\frac{1}{2}}{0}}{\game{-1/2}{1}:\game{0}{1}}\\
\triangleq \game{-\frac{1}{4}}{\game{\game{-1/2}{1}}{\game{-1/2}{1}:1} }\\
\triangleq \game{-\frac{1}{4}}{\game{0}{\game{0}{1}}}\\
\triangleq \game{-\frac{1}{4}}{\game{0}{\frac{1}{2}}}\\
\triangleq \game{-\frac{1}{4}}{\frac{1}{4}} \cong \interval{0}{-\frac{1}{4}}
\end{align*}
\end{center}

Observe that neither summand is balanced but the sum is balanced. In general we can also say that the ordinal sum of two balanced numbers need not be balanced. For example $\frac{1}{2} \os \underline{1}$ is not balanced. We now proceed to the primary result of this section.

\begin{lemma}\label{BalancedNumbersColon1}
Let $G \cong \game{a}{b}$ such that $a,b,G$ are numbers and $b-G \leq 1$. If $p>0$ is the least integer such that $\frac{1}{2^p}< b-G$, then 
\[
G\os 1 = G + \frac{1}{2^p}.
\]
Similarly whenever $G-a \leq 1$, $G\os -1 = G - \frac{1}{2^q}$, where $q>0$ is the least integer satisfying $\frac{1}{2^q}< G-a$.
\end{lemma}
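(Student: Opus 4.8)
The plan is to reduce the computation of $G \os 1$ to a statement about simplest numbers and then to a finite combinatorial fact about dyadic rationals. First I would apply \cref{OS1Dom}: since $G \cong \game{a}{b}$ has the numbers $a,b$ as its options, \cref{OS1Dom} gives $G \os 1 \triangleq \game{G}{b}$, and in particular $G \os 1 = \game{G}{b}$ as values. Because $G < b$ are both numbers, the Simplicity Theorem identifies $\game{G}{b}$ with the unique number of least birthday in the open interval $(G,b)$. So the whole lemma reduces to showing that the simplest number in $(G,b)$ is $G + \tfrac{1}{2^p}$. The ``similarly'' clause is the exact mirror image under negation, using the second half of \cref{OS1Dom} (namely $G \os -1 \triangleq \game{a}{G}$) together with the Simplicity Theorem applied to $(a,G)$; I would simply remark that it follows by replacing $G$ with $-G$.

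Next I would pin down the width of $(G,b)$ relative to the canonical radius $\rho$ of $G$. If $G$ is a non-integer, its canonical form is $\game{G-\rho}{G+\rho}$, whose options are strictly simpler than $G$; since $G$ is the simplest number in $(a,b)$, the simpler number $G+\rho$ cannot lie in $(a,b)$, and as $G+\rho > G > a$ this forces $b \le G+\rho$, i.e. $b - G \le \rho$. If $G$ is an integer then $\rho = 1$ and the hypothesis $b - G \le 1$ gives the same bound $b - G \le \rho$. Writing $\rho = 2^{-k}$ (with $k=0$ in the integer case), the bound $b - G \le 2^{-k}$ forces the least $p$ with $2^{-p} < b - G$ to satisfy $p \ge k+1$, and minimality of $p$ gives $2^{-(p-1)} \ge b - G$, hence $b \le G + 2^{-(p-1)}$.

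Finally comes the combinatorial heart: showing the simplest number in $(G,b)$ is $G + 2^{-p}$. Since $G$ is a multiple of $2^{-k}$ and hence of $2^{-(p-1)}$, the points $G$ and $G + 2^{-(p-1)} \ge b$ are consecutive multiples of $2^{-(p-1)}$, so $(G,b) \subseteq (G,\, G+2^{-(p-1)})$ contains no dyadic rational of denominator at most $2^{p-1}$, in particular no integer, so $(G,b)$ lies in a single unit interval. The number $G + 2^{-p}$ does lie in $(G,b)$ because $2^{-p} < b - G$, it has denominator exactly $2^p$, and it is the unique multiple of $2^{-p}$ strictly between the consecutive multiples $G$ and $G+2^{-(p-1)}$ of $2^{-(p-1)}$. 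Since within a unit interval the dyadic of least denominator is the one of least birthday (itself a consequence of the Simplicity Theorem), $G + 2^{-p}$ is the simplest number in $(G,b)$, which completes the proof. I expect this last step, namely justifying that within a unit interval ``smallest denominator'' coincides with ``smallest birthday'' and that the canonical-radius bound $b - G \le \rho$ is exactly what prevents a simpler (smaller-denominator) number from entering $(G,b)$, to be the main obstacle; everything before it is a routine unwinding of \cref{OS1Dom} and the Simplicity Theorem.
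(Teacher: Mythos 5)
Your proposal is correct, and it diverges from the paper's proof at exactly the step you identify as the heart of the argument. Both proofs share the same skeleton: \cref{OS1Dom} plus the Simplicity Theorem reduce the lemma to showing that the simplest number in $(G,b)$ is $G+\frac{1}{2^p}$, and minimality of $p$ together with $b-G\le 1$ gives $(G,b)\subseteq \left(G,G+\frac{1}{2^{p-1}}\right)$. From there the paper argues game-theoretically: it claims $\game{G}{G+\frac{1}{2^{p-1}}}=G+\frac{1}{2^p}$ and verifies this by checking that every first-player move in the difference $\game{G}{G+\frac{1}{2^{p-1}}}-G-\frac{1}{2^p}$ loses. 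You instead argue arithmetically: the canonical-radius bound $b-G\le\rho$ (which you derive from simplicity in the non-integer case and from the hypothesis in the integer case) makes $G$ a multiple of $\frac{1}{2^{p-1}}$, so the interval contains a unique dyadic of denominator $2^p$ and none of smaller denominator, and least denominator coincides with least birthday inside a unit interval. Two observations on the comparison. First, your explicit derivation of $b-G\le\rho$ is a genuine improvement over the paper, which merely asserts the corresponding inequality $\frac{1}{2^{p-1}}\le r$; this input is essential rather than cosmetic, since without it $G$ need not be a multiple of $\frac{1}{2^{p-1}}$ and the paper's key claim fails (for the number $\frac{3}{8}$ with $p=2$ one has $\game{\frac{3}{8}}{\frac{7}{8}}=\frac{1}{2}\ne\frac{5}{8}$), and indeed the paper's assertion that Right's move from $-G$ to $-a$ is ``obviously losing'' silently depends on exactly this bound. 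Second, the price of your route is the fact you flag yourself: that within a unit interval the dyadic of least denominator is the unique one of least birthday. This is true and standard (the birthday of $n+\frac{a}{2^j}$ with $a$ odd, $0<a<2^j$, $n\ge 0$, is $n+1+j$), but it is not literally a consequence of the Simplicity Theorem as stated and is proved nowhere in the paper, so in a final write-up you should either prove it as a small lemma (induction on $j$, using the construction of dyadics sketched in the paper's introduction) or cite it; alternatively, you could substitute the paper's difference-game computation for this single step and keep the rest of your argument, which needs no facts about birthdays beyond simplicity. Your handling of the $G\os(-1)$ case by negation, via $-(G\os H)=(-G)\os(-H)$, is fine and matches the paper's ``nearly identical argument'' remark.
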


\begin{proof}
Let $G = \game{a}{b}$ such that $a,b,G$ are numbers. Recall that $G\os 1 = x$ where $x \in (G,b)$ is the simplest number on the interval from $G$ to $b$. As $b-G \leq 1$ and $b-G$ is not an infinitesimal there is a $p>0$ such that $\frac{1}{2^k}< b-G$. Then by the least integer principle there is a least integer $p>0$ such that $\frac{1}{2^p}< b-G$. Then $b-G \leq \frac{1}{2^{p-1}}\leq  r \leq 1$ where $r$ is the canonical radius of $G$. 

Note that $G\os 1 \triangleq \game{G}{G+(b-G)} = \game{G}{G+\frac{1}{2^p}}$ (see \cref{OS1Dom}). We claim that $\game{G}{G + \frac{1}{2^{p-1}}} = G + \frac{1}{2^p}$. Consider the difference, 
\[
\game{G}{G + \frac{1}{2^{p-1}}} - G - \frac{1}{2^p}.
\]
If Right moves first they may move $\game{G}{G + \frac{1}{2^{p-1}}}$ to $G+\frac{1}{2^{p-1}}$, $-G$ to $-a$ or $-\frac{1}{2^{p}}$ to $0$. All of these moves are obviously losing. Similarly if Left moves first, then they may move $\game{G}{G + \frac{1}{2^{p-1}}}$ to $G$ or $-G$ to $-b$  or $-\frac{1}{2^p}$ to $-\frac{1}{2^{p-1}}$. These are also losing. So $\game{G}{G + \frac{1}{2^{p-1}}} - G - \frac{1}{2^p} = 0$ implying $\game{G}{G + \frac{1}{2^{p-1}}} = G + \frac{1}{2^p}$. Equivalently $G + \frac{1}{2^p}$ is the simplest number on the interval $(G, G + \frac{1}{2^{p-1}})$. As $(G,b) \subseteq (G, G + \frac{1}{2^{p-1}})$, $x = G + \frac{1}{2^p}$ as required. The case of $G\os -1$ follows by a nearly identical argument.
\end{proof}

If we relax our assumption that $b - G \leq 1$ then \cref{BalancedNumbersColon1} is false. As an example consider $G = \game{0}{4} = 1$ and the sum \[G \os 1 = 2 \neq 3 = G + \frac{1}{2^{(-1)}}.\] 
The important thing for our purposes is that games such as this do not often appear in familiar rulesets and are somewhat artificial when you consider numbers as being constructed from ordinal sums.

\begin{theorem}[General Ordinal Sum Theorem]\label{GameOsInteger}
Let $k>0$ be an integer and $G \cong \game{a}{b}$ be numbers. If $b-G\leq 1$ and $\alpha_i>0$ is the least integer such that $2^{-\alpha_i}< b-G - (\sum_{j=1}^{i-1} 2^{-\alpha_j})$, then 
\[
G\os k = G + \sum_{i=1}^k \frac{1}{2^{\alpha_i}}.
\]
Similarly if $G-a \leq 1$ and $\beta_i$ is the least integer such that $2^{-\beta_i}< G-a - (\sum_{j=1}^{i-1} 2^{-\beta_j})$, then
\[
G\os -k = G - \sum_{i=1}^k \frac{1}{2^{\beta_i}}.
\]
\end{theorem}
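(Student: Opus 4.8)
The plan is to induct on $k$, taking the base case $k=1$ to be exactly \cref{BalancedNumbersColon1}. Throughout I would write $s_j = G + \sum_{i=1}^{j} 2^{-\alpha_i}$ (so that $s_0 = G$), and record the key telescoping identity that unwinds the definition of $\alpha_k$: since $b - G - \sum_{j=1}^{k-1} 2^{-\alpha_j} = b - s_{k-1}$, the index $\alpha_k$ is simply the least integer with $2^{-\alpha_k} < b - s_{k-1}$. The target then reads $G \os k = s_k = s_{k-1} + 2^{-\alpha_k}$.

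For the inductive step I would assume $G \os (k-1) = s_{k-1}$. Because $k \cong \game{k-1}{}$ has a nonempty Left option set, \cref{OptionDominationInOrdSumsOfNumbers} applies and the Left option $a$ of $G$ is strictly dominated in $G \os k$; removing it gives
\[
G \os k \triangleq \game{G \os (k-1)}{b}.
\]
By the inductive hypothesis the Left option here has value $s_{k-1}$, and this is a legitimate number form because $G \os (k-1)$ is a number lying strictly below $b$ (for $k \ge 2$ it already carries $b$ as a Right option by the same reduction, and for $k=1$ it is $G \cong \game{a}{b} < b$). Hence by the Simplicity Theorem $G \os k$ equals the simplest number in the interval $(s_{k-1}, b)$.

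It remains to identify that simplest number. Since $s_{k-1} \ge G$ we have $0 < b - s_{k-1} \le b - G \le 1$, so the interval has length at most $1$, is nonempty, and (as $b - s_{k-1}$ is a positive dyadic, hence not infinitesimal) the least-integer index $\alpha_k$ is well defined and positive. The computation in the proof of \cref{BalancedNumbersColon1} establishes precisely that for numbers $u < v$ with $v - u \le 1$, the simplest number in $(u,v)$ is $u + 2^{-\alpha}$, where $\alpha$ is least with $2^{-\alpha} < v - u$; I would apply this verbatim with $u = s_{k-1}$ and $v = b$, obtaining $G \os k = s_{k-1} + 2^{-\alpha_k} = s_k$ and closing the induction. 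The statement for $G \os -k$ then follows by the negation symmetry $-(G \os -k) = (-G) \os k$, noting that $-G \cong \game{-b}{-a}$ has the required literal form, that $(-a) - (-G) = G - a \le 1$, and that the index condition for $(-G) \os k$ is exactly the defining condition for $\beta_i$; applying the already-proved positive case and negating yields $G \os -k = G - \sum_{i=1}^k 2^{-\beta_i}$.

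The main obstacle is organizational rather than a single hard estimate: one must verify that the reduction $G \os k \triangleq \game{G \os (k-1)}{b}$ is valid at \emph{every} $k \ge 1$ (this is where \cref{OptionDominationInOrdSumsOfNumbers} and the hypothesis that the options of $G$ are numbers both enter), and, most delicately, that the recursively defined $\alpha_k$ matches the ``least integer'' condition for the successively shrinking interval $(s_{k-1}, b)$. The telescoping identity $b - G - \sum_{j=1}^{k-1} 2^{-\alpha_j} = b - s_{k-1}$ is exactly what makes this alignment work and is the one identity I would state explicitly before invoking \cref{BalancedNumbersColon1}.
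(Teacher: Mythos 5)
Your overall architecture matches the paper's: induction on $k$ with \cref{BalancedNumbersColon1} as the base case, removal of the dominated option $a$ to get $G\os k \triangleq \game{G\os (k-1)}{b}$, and the telescoping identity $b - G - \sum_{j=1}^{k-1}2^{-\alpha_j} = b - s_{k-1}$ to align $\alpha_k$ with the shrinking gap; the negative case via $-(G\os H) = (-G)\os(-H)$ is also fine. However, there is a genuine gap at the final step of the inductive argument: the ``interval fact'' you extract from the proof of \cref{BalancedNumbersColon1} --- that for \emph{any} numbers $u<v$ with $v-u\leq 1$ the simplest number in $(u,v)$ is $u+2^{-\alpha}$, where $\alpha$ is least with $2^{-\alpha}<v-u$ --- is false as stated. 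Take $u=\tfrac14$, $v=1$: then $\alpha=1$ and $u+2^{-\alpha}=\tfrac34$, but the simplest number in $(\tfrac14,1)$ is $\tfrac12$. The lemma's computation does not establish your general statement; it crucially uses that the left endpoint is the \emph{value of the form} $\game{a}{b}$, i.e.\ the simplest number in $(a,b)$. That hypothesis is what bounds $b-G$ by the canonical radius of $G$ (the line ``$b-G\leq \frac{1}{2^{p-1}}\leq r\leq 1$'' in the lemma's proof) and thereby rules out a simpler number, like $\tfrac12$ above, lying inside the interval. Since your $u=s_{k-1}$ enters the argument only as a dyadic number satisfying $0<b-s_{k-1}\leq 1$, the instruction ``apply this verbatim'' is not justified.

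The gap is repairable, and the repair is essentially the paper's proof. Either (i) observe that, by your own reduction one level down, $s_{k-1}$ is the simplest number in $(s_{k-2},b)$, so every number in $(s_{k-1},b)$ is born after $s_{k-1}$, and then reprove the interval fact under this added simplicity hypothesis; or, more cleanly, (ii) write $G\os k \cong (G\os(k-1))\os 1 \triangleq \game{G\os(k-2)}{b}\os 1$ (using \cref{TeqBaseImpOSTeq}) and apply \cref{BalancedNumbersColon1} as a black box to the base $\game{G\os(k-2)}{b}$: its hypotheses (options and value are numbers, $b$ minus the value is at most $1$) hold verbatim, and its least-integer index is exactly your $\alpha_k$ by telescoping. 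Route (ii) is what the paper does; it keeps every invocation of the lemma attached to an actual game form $\game{\cdot}{b}$, which is precisely the setting in which the lemma's simplicity computation is sound.
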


\begin{proof}
We proceed by induction on $k$. If $k = 1$, then the result follows by \cref{BalancedNumbersColon1}. Suppose then that $k>1$ and for all $1\leq t< k$, $G:t = G + \sum_{i=1}^t 2^{-\alpha_i}$ and $G:-t = G - \sum_{i=1}^t 2^{-\beta_i}$. Notice that, $\alpha_k$ is the least integer such that $2^{-\alpha_k}< b - G:(k-1) = b - G - \sum_{i=1}^{k-1} 2^{-\alpha_i}$. Thus, \cref{BalancedNumbersColon1} implies 
\begin{center}
\begin{align*}
G\os k 
\cong G\os (k-1)\os 1 
\triangleq \game{G:(k-2)}{b}\os 1\\
= G\os (k-1) + \frac{1}{2^{\alpha_k}}
= G + \sum_{i=1}^k \frac{1}{2^{\alpha_i}}
\end{align*}
\end{center}
as required. Similarly, $\beta_k$ is the least integer such that $2^{-\beta_k}< (G\os -(k-1)) - a = G -a - \sum_{i=1}^{k-1} 2^{-\beta_i} $. Thus, \cref{BalancedNumbersColon1} implies 
\begin{center}
\begin{align*}
G\os -k 
\cong (G\os -(k-1))\os 1 
\triangleq \game{a}{G\os -(k-1)}\os 1\\
= (G\os -(k-1)) - \frac{1}{2^{\beta_k}}
= G - \sum_{i=1}^k \frac{1}{2^{\beta_i}}.
\end{align*}
\end{center}
This concludes the proof.
\end{proof}

\begin{corollary}\label{b-G Special Case}
If $G \cong \game{a}{b}$ such that $a,b,G$ are numbers and $b -G = \frac{1}{2^{p}}$ for some $p$, then
\[
G\os k = G + \frac{1}{2^p} - \frac{1}{2^{p+k}}.
\]
where $k>0$ is an integer.
\end{corollary}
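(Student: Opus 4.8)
The plan is to derive this directly from \cref{GameOsInteger}; the entire content of the corollary is the observation that, under the hypothesis $b - G = \frac{1}{2^p}$, the exponents $\alpha_i$ appearing in that theorem collapse to the closed form $\alpha_i = p + i$. First I would confirm the hypotheses of \cref{GameOsInteger} are met: since $b - G = \frac{1}{2^p} \le 1$ (which forces $p \ge 0$, as is implicit in the statement) the theorem applies and yields $G \os k = G + \sum_{i=1}^k 2^{-\alpha_i}$.

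Next I would compute the $\alpha_i$ by induction on $i$. For the base case, $\alpha_1$ is the least positive integer with $2^{-\alpha_1} < b - G = 2^{-p}$, which is $\alpha_1 = p+1$. For the inductive step, assuming $\alpha_j = p + j$ for all $j < i$, the partial sum telescopes as a geometric series,
\[
\sum_{j=1}^{i-1} 2^{-(p+j)} = 2^{-p}\bigl(1 - 2^{-(i-1)}\bigr) = 2^{-p} - 2^{-(p+i-1)},
\]
so that $b - G - \sum_{j=1}^{i-1} 2^{-\alpha_j} = 2^{-(p+i-1)}$. The least positive integer $\alpha_i$ with $2^{-\alpha_i} < 2^{-(p+i-1)}$ is then $\alpha_i = p + i$, closing the induction.

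Finally I would substitute $\alpha_i = p+i$ into the formula and sum the resulting geometric series,
\[
\sum_{i=1}^k 2^{-(p+i)} = 2^{-p}\bigl(1 - 2^{-k}\bigr) = \frac{1}{2^p} - \frac{1}{2^{p+k}},
\]
giving $G \os k = G + \frac{1}{2^p} - \frac{1}{2^{p+k}}$ as claimed.

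There is essentially no obstacle beyond routine geometric-sum bookkeeping; the only point requiring a moment's care is that each $\alpha_i$ is sharp (i.e. $\alpha_i = p+i$ rather than $p+i-1$), which reduces to noting $2^{-(p+i-1)} \not< 2^{-(p+i-1)}$ while $2^{-(p+i)} < 2^{-(p+i-1)}$. I would also remark that the symmetric identity for $G \os -k$ follows identically from the second half of \cref{GameOsInteger} should one wish to record it.
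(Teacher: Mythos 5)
Your proof is correct and takes essentially the same approach as the paper: both apply \cref{GameOsInteger}, observe that the hypothesis $b-G = \frac{1}{2^p}$ forces $\alpha_i = p+i$, and then evaluate the resulting geometric sum to obtain $\frac{1}{2^p} - \frac{1}{2^{p+k}}$. If anything, your inductive verification that the exponents collapse to $\alpha_i = p+i$ is more careful than the paper's, which simply asserts this fact before summing the series.
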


\begin{proof}
Observe that as $b- G = \frac{1}{2^p}$, for all $i\geq 1$, $\alpha_i = p+i$, that is $2^{-(p+i)} < b-G - (\sum_{j=1}^{i-1} 2^{-\alpha_j}) = \frac{1}{2^p} - (\sum_{j=1}^{i-1} 2^{-(n+j)})$. Then by \cref{GameOsInteger}, $G:k = G + \sum_{i=1}^k \frac{1}{2^{p+i}}$. Recalling the well known identity that $\sum_{i=1}^m \frac{1}{2^m} = 1 - 2^m$ notice that,
\begin{center}
\begin{align*}
G:k =  G + \sum_{i=1}^k \frac{1}{2^{p+i}} = G + (\sum_{i=1}^{p+k} \frac{1}{2^{i}}) - (\sum_{j=1}^{p} \frac{1}{2^{j}}) \\
= G + (1-\frac{1}{2^{p+k}}) - (1-\frac{1}{2^p}) = G + \frac{1}{2^p} - \frac{1}{2^{p+k}}
\end{align*}
\end{center}
as required.
\end{proof}

\begin{corollary}\label{G-a Special Case}
If $G \cong \game{a}{b}$ such that $a,b,G$ are numbers and $G-a = \frac{1}{2^{p}}$ for some $p$, then
\[
G\os -k = G - \frac{1}{2^p} + \frac{1}{2^{p+k}}.
\]
where $k>0$ is an integer.
\end{corollary}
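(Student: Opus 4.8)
The plan is to mirror the argument of \cref{b-G Special Case} verbatim, using the ``$G \os -k$'' half of \cref{GameOsInteger} together with the sequence $\beta_i$ in place of the ``$G \os k$'' half with $\alpha_i$. The entire content of the corollary is that the hypothesis $G - a = \frac{1}{2^p}$ forces the sequence of least integers $\beta_i$ to take the explicit values $\beta_i = p + i$, after which the stated formula is just the evaluation of a finite geometric sum.

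First I would compute the $\beta_i$. By definition $\beta_i$ is the least integer with $2^{-\beta_i} < G - a - \sum_{j=1}^{i-1} 2^{-\beta_j}$. I would argue by induction that $\beta_i = p + i$: for $i = 1$ the defining inequality is $2^{-\beta_1} < \frac{1}{2^p}$, whose least solution is $\beta_1 = p+1$; and assuming $\beta_j = p+j$ for $j < i$, the right-hand side collapses, since
\[
\frac{1}{2^p} - \sum_{j=1}^{i-1} \frac{1}{2^{p+j}} = \frac{1}{2^p}\left(1 - \sum_{j=1}^{i-1} \frac{1}{2^{j}}\right) = \frac{1}{2^{p+i-1}},
\]
so the least integer $\beta_i$ with $2^{-\beta_i} < 2^{-(p+i-1)}$ is $\beta_i = p+i$, as claimed.

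Having pinned down the $\beta_i$, I would invoke \cref{GameOsInteger} (the $G \os -k$ statement) to conclude $G \os -k = G - \sum_{i=1}^k 2^{-(p+i)}$, and then simplify the geometric sum exactly as in the proof of \cref{b-G Special Case}:
\[
\sum_{i=1}^k \frac{1}{2^{p+i}} = \frac{1}{2^p}\sum_{i=1}^k \frac{1}{2^i} = \frac{1}{2^p}\left(1 - \frac{1}{2^k}\right) = \frac{1}{2^p} - \frac{1}{2^{p+k}},
\]
which yields $G \os -k = G - \frac{1}{2^p} + \frac{1}{2^{p+k}}$, as desired.

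There is essentially no hard part here: this corollary is the sign-reversed twin of \cref{b-G Special Case}, and the only thing to verify carefully is the collapsing of the partial sum that produces the uniform value $\beta_i = p+i$ (the point where a stray off-by-one in the ``least integer'' threshold could creep in). I would therefore keep the writeup short, either presenting the $\beta_i = p+i$ computation explicitly and citing the geometric-sum manipulation of the previous corollary, or simply remarking that the result follows from \cref{b-G Special Case} by the Left--Right symmetry under negation (replacing $G$ by $-G$, which sends $\game{a}{b}$ to $\game{-b}{-a}$ and interchanges the two clauses of \cref{GameOsInteger}).
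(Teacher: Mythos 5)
Your proposal is correct and matches the paper's approach exactly: the paper's entire proof is the remark that the result ``follows by an almost identical argument to \cref{b-G Special Case},'' and your writeup simply spells out that mirrored argument (computing $\beta_i = p+i$ and summing the geometric series), with the same conclusion. Your closing alternative via negation symmetry, $-(G\os H) = (-G)\os(-H)$, is also sound and is the same identity the paper invokes when deriving Table~3.
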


\begin{proof}
Proof follows by an almost identical argument to \cref{b-G Special Case}.
\end{proof}

Recall that any non-integer $\frac{a}{2^p}$ in canonical form is identically $\interval{\frac{a}{2^p}}{-\frac{1}{2^p}}$. Thus, the value of any ordinal sum of the form $\frac{a}{2^p}:k$ where $k$ is an integer is given by \cref{b-G Special Case} or \cref{G-a Special Case}. With slightly more work we can also show other identities where $k$ is not an integer.

For example we now show the following identity $\frac{1}{2^p} \os \frac{1}{2^q} = \frac{1}{2^p}+\frac{1}{2^{p+q+1}}$ given in \cite{albert2019lessons} page 240 as a demonstration of \cref{BalancedNumbersColon1} and \cref{G-a Special Case}. Observe that $\frac{1}{2^p} \cong \interval{\frac{1}{2^p}}{-\frac{1}{2^{p}}}$ and $\frac{1}{2^q} \cong 1: -q$. Then applying \cref{BalancedNumbersColon1} and \cref{GameOsInteger},
\begin{center}
\begin{align*}
\frac{1}{2^p} \os \frac{1}{2^q} \cong  \interval{\frac{1}{2^p}}{-\frac{1}{2^{p}}} \os 1 \os -q\\
\triangleq \interval{\frac{1}{2^p}+\frac{1}{2^{p+1}}}{-\frac{1}{2^{p+1}}} \os -q\\
= \frac{1}{2^p}+\frac{1}{2^{p+1}} - \frac{1}{2^{p+1}} + \frac{1}{2^{p+q+1}}\\
= \frac{1}{2^p} + \frac{1}{2^{p+q+1}}.
\end{align*}
\end{center}
It is easy to see that the same argument implies the more general statement that $\frac{a}{2^p} \os \frac{1}{2^q} = \frac{a}{2^p} + \frac{1}{2^{p+q+1}}$ whenever $a$ is odd. In the same vein we can show other identities. For example, we claim that $\frac{a}{2^p} \os (1-\frac{1}{2^q}) = \frac{a}{2^p} + \frac{1}{2^{p+1}} -\frac{1}{2^{p+q+1}}$ where $a$ is odd. Proceeding as before, $\frac{a}{2^p} \cong \interval{\frac{a}{2^p}}{-\frac{1}{2^p}}$ and $1-\frac{1}{2^q} = 1:-1:(q-1)$. Then,
\begin{center}
\begin{align*}
\frac{a}{2^p} \os (1-\frac{1}{2^q}) \cong  \interval{\frac{a}{2^p}}{-\frac{1}{2^p}}\os 1 \os -1\os (q-1)\\
\triangleq \interval{\frac{a}{2^p}+\frac{1}{2^{p+1}} - \frac{1}{2^{p+2}}}{-\frac{1}{2^{p+2}}} \os (q-1)\\
= \frac{a}{2^p}+\frac{1}{2^{p+1}} - \frac{1}{2^{p+2}} + \frac{1}{2^{p+2}} - \frac{1}{2^{p+ 2 + (q-1)}}\\
= \frac{a}{2^p} + \frac{1}{2^{p+1}} - \frac{1}{2^{p+q+1}},
\end{align*}
\end{center}
which is the desired result. With these examples as intuition we now prove a much more general identity that applies to any ordinal sum of numbers where the base is balanced. Note this implies that the following applies whenever the base of the ordinal sum is a non-integer in canonical form.

\begin{theorem}[Balanced Ordinal Sum Theorem]\label{BalancedOSNumber}
If $\interval{n}{-\frac{1}{2^p}}$ is balanced and $n$ is a number, then for all integers $m\geq 0$ and odd integers $0 < a < 2^q$ or integer $a=0$, such that $m+a2^{-q}>0$;
\[
\interval{n}{-\frac{1}{2^p}} \os (m+\frac{a}{2^q}) = n + \frac{1}{2^p} - \frac{1}{2^{p+m}} + \frac{a}{2^{p+m+q+1}}.
\]
\end{theorem}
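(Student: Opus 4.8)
The plan is to induct on the exponent $q$ in the denominator of the subordinate, using \cref{b-G Special Case} for the base case and the Simplicity Theorem for the inductive step. The standing facts, both consequences of balancedness, are that $\interval{n}{-\frac{1}{2^p}}=n$ (\cref{BalcendImpliesEQ}) with literal options $n-\frac{1}{2^p}$ and $n+\frac{1}{2^p}$, and that $n$ itself has denominator exponent at most $p$ (since $n$ must be the simplest number on $\left(n-\frac{1}{2^p},\,n+\frac{1}{2^p}\right)$, cf.\ \cref{prop: canonical radius}). The base case is $a=0$, where the subordinate is the integer $m\ge 1$ and the statement is exactly \cref{b-G Special Case}, giving $\interval{n}{-\frac{1}{2^p}}\os m=n+\frac{1}{2^p}-\frac{1}{2^{p+m}}$; I also record the degenerate reading $\interval{n}{-\frac{1}{2^p}}\os 0=n$.

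For the inductive step take $q\ge 1$ and $a$ odd. The canonical form of the subordinate is $m+\frac{a}{2^q}\cong\game{m+\frac{a-1}{2^q}}{m+\frac{a+1}{2^q}}$, and since $a\pm 1$ is even each option is either an integer or has denominator exponent at most $q-1$. Expanding $\interval{n}{-\frac{1}{2^p}}\os\left(m+\frac{a}{2^q}\right)$ by the definition of ordinal sum, its Left options are $n-\frac{1}{2^p}$ and $\interval{n}{-\frac{1}{2^p}}\os\left(m+\frac{a-1}{2^q}\right)$, and symmetrically on the Right with $\frac{a+1}{2^q}$. Applying the induction hypothesis (and the base case when an option is an integer) these two subordinate moves have values
\[
L=n+\tfrac{1}{2^p}-\tfrac{1}{2^{p+m}}+\tfrac{a-1}{2^{p+m+q+1}},\qquad R=n+\tfrac{1}{2^p}-\tfrac{1}{2^{p+m}}+\tfrac{a+1}{2^{p+m+q+1}}.
\]

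I would then discard the dominated base options: $L\ge n>n-\frac{1}{2^p}$, while $a+1\le 2^q<2^{q+1}$ forces $R<n+\frac{1}{2^p}$, so the surviving form is $\game{L}{R}$ with $L<R$ numbers, and by the Simplicity Theorem its value is the simplest number in $(L,R)$. The claimed value $V$ is exactly the midpoint $\frac{L+R}{2}$, and $R-L=\frac{1}{2^{p+m+q}}$. To see $V$ is simplest, note that because $a$ is odd and every other summand of $V$ has denominator exponent at most $p+m$ (using the standing bound on $n$), the value $V$ has denominator exactly $2^{p+m+q+1}$; hence $L=V-\frac{1}{2^{p+m+q+1}}$ and $R=V+\frac{1}{2^{p+m+q+1}}$ have even numerators and are consecutive multiples of $\frac{1}{2^{p+m+q}}$. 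No number of denominator at most $2^{p+m+q}$ lies strictly between them, and $V$ is the unique multiple of $\frac{1}{2^{p+m+q+1}}$ that does, so $V$ is simplest in $(L,R)$ and the step closes.

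The point I expect to be most delicate is making the induction dovetail with the base case at the ends of the range: when $a=1$ the Left option collapses to the integer $m$ (value $n$ if $m=0$), and when $a=2^q-1$ the Right option collapses to the integer $m+1$, so one must check that the closed formula evaluated at the reduced representation returns the same number as the honest integer computation of \cref{b-G Special Case} — it does, since at $\frac{a+1}{2^q}=1$ the combination $-\frac{1}{2^{p+m}}+\frac{2^q}{2^{p+m+q+1}}$ collapses to $-\frac{1}{2^{p+m+1}}$. The other point needing care is that balancedness cannot be dropped: it is exactly what supplies both the literal options $n\pm\frac{1}{2^p}$ and the bound on the denominator of $n$ that drives the simplicity computation, and the remark after \cref{underline0 on Top} shows the identity genuinely fails for unbalanced bases.
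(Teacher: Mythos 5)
Your proposal is correct, and its engine is the same as the paper's: induction on $q$, replacing the subordinate by its canonical form, evaluating the two subordinate options by the inductive hypothesis, and pinning the value down with the Simplicity Theorem. The organization differs, though, in how the integer part $m$ is handled. The paper splits into two cases: for $m=0$ it runs the induction on $q$, and for $m>0$ it factors the subordinate as $m+\frac{a}{2^q}=m\os\frac{a}{2^q}$ via the Colon Principle, computes $\interval{n}{-\frac{1}{2^p}}\os m$ by \cref{b-G Special Case}, shows this is again a balanced ball modulo domination, namely $\interval{n+\frac{1}{2^p}-\frac{1}{2^{p+m}}}{-\frac{1}{2^{p+m}}}$, and then feeds that back into the $m=0$ case. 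You instead run a single induction carrying $m$ through the formula, with the integer subordinate ($a=0$) as the base case, so you never need the re-balancing observation. Each route buys something: the paper's isolates a reusable structural fact (ordinal-summing a balanced ball with a positive integer yields another balanced ball, modulo domination), which reflects the associativity of $\os$ and keeps the simplicity analysis confined to one case; yours is self-contained and, notably, more rigorous at the two places the paper is thin. Where the paper asserts the sum lies in the interval between the two subordinate-option values and that "it should be clear" the claimed value is the simplest number there, you explicitly verify that the base options $n\pm\frac{1}{2^p}$ are dominated (using $a+1\le 2^q$ on the Right), and you justify simplicity by the denominator-parity argument: balancedness bounds the denominator exponent of $n$ by $p$ (via \cref{prop: canonical radius}), so the claimed value is the unique number of minimal denominator exponent strictly between the consecutive multiples $L$ and $R$ of $\frac{1}{2^{p+m+q}}$. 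Your closing consistency checks at $a=1$ and $a=2^q-1$ (where an option collapses to an integer) address exactly the seam between the inductive step and the base case that your unified induction creates; the paper's two-case structure avoids that seam but pays for it with the extra re-balancing lemma.
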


\begin{proof}
We consider the case $m=0$ and the case $m>0$ distinctly.

\vspace{0.25cm}
\hspace{-0.5cm}
\underline{Case.1:} $m=0$. Hence, if $a = 0$ the result is trivial. Suppose then that $a\neq 0$. We proceed by induction on $q\geq 0$. If $q = 0$, then the result follows directly from \cref{b-G Special Case} or \cref{G-a Special Case}. Suppose then that $q>0$. Then, $\frac{a}{2^q} = \interval{\frac{a}{2^q}}{-\frac{1}{2^q}}$. Thus, the Colon Principle implies that 
\[
\interval{n}{-\frac{1}{2^p}} \os \frac{a}{2^q} = \interval{n}{-\frac{1}{2^p}} \os \interval{\frac{a}{2^q}}{-\frac{1}{2^q}}.
\]
Observe that $\frac{a}{2^q} - \frac{1}{2^q} = \frac{b}{2^{q-1}}$ where $b = \frac{a-1}{2}$ and $\frac{a}{2^q} + \frac{1}{2^q} = \frac{c}{2^{q-1}}$ where $c = \frac{a+1}{2}$. Then by induction,
\[
\interval{n}{-\frac{1}{2^p}} \os \frac{b}{2^{q-1}} = n + \frac{b}{2^{p+q}}
\]
and 
\[
\interval{n}{-\frac{1}{2^p}} \os \frac{c}{2^{q-1}} = n + \frac{c}{2^{p+q}}.
\]
Hence, $\interval{n}{-\frac{1}{2^p}} \os \frac{a}{2^q} \in (n + \frac{b}{2^{p+q}}, n + \frac{c}{2^{p+q}})$. Given our choices of $b$ and $c$ it should be clear that the simplest number of this interval is in fact $n + \frac{a}{2^{p+q+1}}$. Thus, $\interval{n}{-\frac{1}{2^p}} \os \frac{a}{2^q} = n + \frac{a}{2^{p+q+1}}$ as required.

\vspace{0.25cm}
\hspace{-0.5cm}
\underline{Case.2:} $m>0$. Observe that $m+ \frac{a}{2^{q}} = m\os \frac{a}{2^{q}}$. Thus, by the Colon Principle, 
\[
\interval{n}{-\frac{1}{2^p}} \os (m+\frac{a}{2^q}) = \interval{n}{-\frac{1}{2^p}} \os m \os \frac{a}{2^q}.
\]
We consider first, $\interval{n}{-\frac{1}{2^p}} \os m$. By \cref{b-G Special Case}, $\interval{n}{-\frac{1}{2^p}} \os m = n + \frac{1}{2^p} - \frac{1}{2^{p+m}}$. Notice that $\interval{n}{-\frac{1}{2^p}} \os (m-1)$ is the greatest Left option of $\interval{n}{-\frac{1}{2^p}} \os m$ and again by \cref{b-G Special Case}, $\interval{n}{-\frac{1}{2^p}} \os (m-1) = n + \frac{1}{2^p} - \frac{1}{2^{p+m-1}}$. Of course the best Right option of $\interval{n}{-\frac{1}{2^p}} \os m$ is $n + \frac{1}{2^p}$ as Right has gained no new options in the subordinate. Thus, $\interval{n}{-\frac{1}{2^p}} \os m \triangleq \interval{n+\frac{1}{2^p} -\frac{1}{2^{p+m}}}{-\frac{1}{2^{p+m}}}$ which is balanced. Then,
\begin{center}
\begin{align*}
\interval{n}{-\frac{1}{2^p}} \os m \os \frac{a}{2^q} \triangleq \interval{n+\frac{1}{2^p} -\frac{1}{2^{p+m}}}{-\frac{1}{2^{p+m}}} \os \frac{a}{2^q}\\
= n+\frac{1}{2^p} -\frac{1}{2^{p+m}} + \frac{a}{2^{p+m+q+1}}
\end{align*}
\end{center}
by case.1. This concludes the proof.
\end{proof}

Recalling every non-integer $x$ in canonical form is balanced \cref{BalancedOSNumber} implies a formula for the value of an ordinal sum of numbers where the base is in canonical form. This formula must be broken down into several cases, which depend on the signs of the base and subordinate, however, are all easy to verify given what we have already shown. For a list of these formula see Table~3. Note that when reproducing or verifying  Table~3 it is useful to recall that $-(G\os H) = (-G)\os (-H)$ for cases where the base and subordinate have different signs and \cref{GameOsInteger} for the cases where the basis is an integer.

\begin{table}[h!]
\centering
\begin{tabular}{| c | c |}
\hline 
Sum & Value \\ \hline 
$n \os (m + \frac{b}{2^q})$ & $n+m+\frac{b}{2^{q}}$\\ \hline 
$n \os -(m + \frac{b}{2^q})$ & $n - 1 + \frac{1}{2^m} - \frac{b}{2^{m+q+1}}$\\ \hline
$(n + \frac{a}{2^p}) \os (m + \frac{b}{2^q})$ & $n+\frac{a+1}{2^p} - \frac{1}{2^{p+m}} + \frac{b}{2^{p+m+q+1}}$ \\ \hline
$(n + \frac{a}{2^p}) \os -(m + \frac{b}{2^q})$ & $n + \frac{a-1}{2^p} + \frac{1}{2^{p+m}} - \frac{b}{2^{p+m+q+1}}$ \\ \hline
\end{tabular}
\label{tab:CFOrdinalNumbers}
\caption{Ordinal sums of numbers where the base is in canonical form. Here $n,m,a,b \geq 0$ are integers such that $0 < a < 2^p$ and $0 \leq b < 2^q$ are odd.}
\end{table}

\section{Conclusion}

In this paper we have studied ordinal sums of numbers both in general and in a particular well behaved ruleset, which we call \TT{}. This has developed the theory of ordinal sums though presenting known, but not yet published results from \cref{TeqBaseImpOSTeq}, as well as introducing novel pieces of notation such as balls $\interval{m}{\Delta}$ and the notion of a game being balanced. Contributions that are significant given every number is equivalent modulo domination to a ball (\cref{Teq=Number}) and \cref{TeqBaseImpOSTeq} implying that this is sufficient to reduce every ordinal sum of numbers to an ordinal sum of balls.

Along with analysing \TT{} we have also given a formulas for the value of ordinal sums of numbers where the subordinate is an integer, see \cref{GameOsInteger}. We have also improved this for special cases, see \cref{b-G Special Case} and \cref{G-a Special Case}. In particular for numbers that are balanced with radius $\frac{-1}{2^p}$, which includes all canonical forms of non-integers, we have used these results to give a closed formula for an ordinal sum with any subordinate that is a number \cref{BalancedOSNumber}. Thereby resolving the problem of determining the value of an ordinal sum where the basis is in canonical form, originally proposed by Conway in \cite{conway2000numbers} (see pages 192-195), for numbers.

We have also constructed the canonical form of every number as a tower in \TT{} (see \cref{LeaningTower}) where no story (summand) has a different sign then the tower itself. Beyond this we have classified the structure modulo domination of a story in \TT{} through \cref{n-nTeq1-1} and \cref{n-mTeq0+k}. From which the structure of a tower modulo domination can be easily verified in linear time in the number stories by repeatedly applying \cref{GameOsInteger}. Given that it is simple to analyze the form and value of a disjunctive sum of games where the summands are numbers, we have outlined an easy way to determine the winner and value of a \TT{} game (which provides insight into winning strategies).

We conclude the paper with a list of open questions relating to this work;
\begin{enumerate} 
    \item Describe the forms equivalent modulo domination and values of \textsc{Blue-Green-Red} \TT{}, that is, the games of the form $\sum_{j=1}^m \bigodot_{i=1}^n (b_i+g_i+r_i)$ where $b_i$ is a non-negative integer, $r_i$ is a non-positive integer, and $g_i$ is a nimber;
    \item Extend the theory of numbers modulo domination to numbers born on day $\omega$ and later;
    \item Determine if there is an extension of our results to surreal numbers;
    \item Extend \cref{LeaningTower} to numbers born on day $\omega$ and later;
    \item Investigate ordinal sums of numbers of the form $G \cong \game{a}{b}$ where $G-a$ or $b-G$ is strictly greater than $1$;
    \item Using \cref{TeqBaseImpOSTeq} analyze ordinal sums of a class of games that are not numbers or impartial games;
    \item Give criteria for balls $\interval{m}{\Delta}$ to be balanced when $m,\Delta$ and/or $\interval{m}{\Delta}$ are not numbers;
    \item Letting $\interval{m}{\Delta_1, \dots, \Delta_k} \cong \game{m+\{\Delta_i\}}{m-\{\Delta_i\}}$ be a \emph{generalized ball}, prove or disprove that for all normal play games $G$, there exists a $\interval{m}{\Delta_1, \dots, \Delta_k}$ such that $G = \interval{m}{\Delta_1, \dots, \Delta_k}$.
\end{enumerate}

\section*{Acknowledgements}
We would also like to acknowledge the support of the Natural Sciences and Engineering Research Council of Canada (NSERC) for support through the Canadian Graduate Scholarship -- Master's program.

\bibliographystyle{plain}
\bibliography{towers}

\end{document}